  \renewcommand{\vec}[1]{\boldsymbol{#1}}
  \newcommand{\tc}{t_{\rm c}}
  \newcommand{\te}{t_{\rm e}}
  \newcommand{\T}{\Delta\mathrm{T}}
  \newtheorem{remark}{Remark}
\begin{document}

  \title{Collaborative Autonomous Optimization of Interconnected Multi-Energy Systems with Two-Stage Transactive Control Framework}

\author{Yizhi Cheng, Peichao Zhang, Xuezhi Liu \\
\thanks{This research was funded by the National Key R\&D Program of China (Basic Research Class 2018YFB0905000).

Yizhi Cheng, Peichao Zhang and Xuezhi Liu are with the Department of Electrical Engineering, Shanghai Jiao Tong University, Shanghai, 200240 China (emails: tabdel@sjtu.edu.cn; pczhang@sjtu.edu.cn; liuxz@sjtu.edu.cn).
}}

  \maketitle

  \begin{abstract}

  Motivated by the benefits of multi-energy integration, this paper establishes a bi-level two-stage framework based on transactive control, in order to achieve optimal energy provision among interconnected multi-energy systems (MESs). At the lower level, each MES autonomously determines the optimal set points of each controllable assets by solving a cost minimization problem, in which rolling horizon optimization is adopted to deal with load and renewable energies’ stochastic features. A technique is further implemented for optimization model convexification by relaxing storages’ complementarity constraints, and its mathematical proof verifies the exactness of the relaxation. At the upper level, a coordinator is established to minimize total costs of collaborative interconnected MESs while preventing transformer overloading. This collaborative problem is further decomposed and solved iteratively in a two-stage procedure based on market-clearing mechanism. A distinctive feature of the method is that it is compatible with operational time requirement, while retaining scalability, information privacy and operation authority of each MES. Effectiveness of the proposed framework is verified by simulation cases that conduct detailed analysis of the autonomous-collaborative optimization mechanism.

  \end{abstract}

  \begin{IEEEkeywords}
  Multi-Energy System, Transactive Control, Two-stage Bi-level Optimization, Constraint Relaxation
  \end{IEEEkeywords}

  \IEEEpeerreviewmaketitle

  \section{Introduction}

  \IEEEPARstart {C}{ontinuous} environment deterioration and energy depletion have necessitated the comprehensive utilization of various forms of energy. Accordingly, multi-energy system (MES) has gained significant attention for its ability to improve comprehensive energy efficiency, as well as to benefit system economy and environment \cite{7842813}. Recent years have also witnessed a research re-orientation from energy optimization of a standalone MES to the collaborative optimization among multiple interconnected MESs (IMESs) \cite{CHEN2018403}. For one thing, IMESs are able to shift supply and demand across energy vectors and networks, as well as to handle the uncertain and volatile generation outputs of renewable energy sources (RES) \cite{8534390, 6872821}. In addition, owing to the energy complementarity, the collaboration of MESs have the untapped potentials to jointly minimize the operational cost \cite{LIU-ACCESS2018}, enhance the overall operation efficiency and to increase system flexibility \cite{CHEN2018403}.

  Collaboration methods of interconnected subsystems generally fall into centralized and distributed mechanisms. The centralized models are established in \cite{CHEN2018403, EN2448,BEIGVAND20171090, LIU-ACCESS2018} and subsequently solved with traditional mathematical algorithms \cite{EN2448, CHEN2018403} or modern intelligent optimization techniques \cite{BEIGVAND20171090, LIU-ACCESS2018}. Although such optimizations guarantee a global optimal utilization of resources, they fail to meet privacy protection, scalability and openness requirements. Besides, MESs may have different owners and schedule resources based on their own economic rules and policies \cite{7812781,CHEN2018403}. This indicates that a direct decision authority or compulsory dispatch orders can be unpractical.

  Concerning this, the distributed mechanism is more favorable. While ensuring satisfactory solutions, it is capable of protecting crucial information of individual entities \cite{8534390}. Two commonly used methods of distributed mechanism are cooperative game theory \cite{8004502,8017431}, and alternating direction method of multipliers (ADMM) algorithm \cite{8283573, en11102555}. Although these methods address the issue of scalability, the coordination signals used in these methods, e.g., the Lagrange multipliers, do not provide explicit market information in collaboration \cite{8534390}. Compared with these methods, transactive energy (TE) developed in recent years uses price as a key operational parameter in collaboration \cite{PNNL22946}. It has been used in several applications, such as the Grid-SMART demonstration project \cite{gridSMART} and the PowerMatching City project \cite{Kok2005}, as well as in some researches on coordinating networked microgrids \cite{8534390,8501585,8486723}.

  Although the TE framework has been successfully applied to optimize the operation of IMESs, two prominent issues still need to be addressed.

  First, the existing TE methods usually require a comparable number of iterations to converge \cite{8534390}, which makes it unpractical for hourly-scheduling or real-time optimization in respect of communication latency, throughput and distortion. 
  Considering the uncertainty and unpredictability of RES output and load demand profile, a collaboration framework for hourly-scheduling is highly demanded, so as to better integrate high penetration of RES and improve overall energy efficiency. Although an adaptive scheme has been proposed in \cite{FATHI-SE2013} for power dispatching among networked microgrids in this aspect, it is not applicable to model with intertemporal constraints.
   
  Electric or thermal storages are becoming increasingly important in the MES. The second issue is concerned with constraints that prevent simultaneous charging and discharging of energy storages. To be specific, existence of these complementarity constraints leads to mix-integer programming that results in long solution time and excessive iterations due to additional binary variables \cite{7091047}. In some cases, these non-convex optimizations are even computationally intractable to solve \cite{7091047, 8592008}. Besides, sometimes a globally optimal solution is not guaranteed even with efficient algorithms. Although researchers of \cite{BECK2016331, 6837496, 8486723,CORTES201953} have illustrated that these complementarity constraints are redundant under normal operation mode, an exact definition of such normal cases is lacked in these researches. Three groups of sufficient conditions are proved in \cite{8592008} for exact relaxation of storage-concerned economic dispatch, however, those conditions are not generally suitable for this paper. Some preliminary work regarding this issue has been made in our previous paper \cite{8245724}, while this paper follows its footstep and provides proof with depth both mathematically and through simulation.

  To tackle these two problems, a two-stage bi-level transactive control (TC) framework is presented in this paper to realize a collaborative optimization of autonomous MESs. Its main contributions are:
  \begin{itemize}
    \item The bi-level collaborative problem of coordinating IMESs installed with storages is modeled with rolling horizon optimization to deal with stochastic features of RES and loads.
    \item An equivalent-energy-change transform is proposed for relaxing storage complementarity constraints, and one sufficient condition is provided and proved to guarantee the exact relaxation.
    \item A two-stage optimization procedure is designed where a dual-variable simplification technique is applied to solve the hourly-scheduling problem within limited iterations, thus meeting the operational time requirement.
  \end{itemize}

  The paper is structured as follows. The general framework is firstly described in Section II. Section III then develops the detailed MES-level autonomous optimization. The two-stage bi-level transactive energy optimization procedure is then proposed in Section IV and tested in Section V. Finally, Section VI concludes the paper.

  \section{Bi-level Optimization Framework}

  \subsection{Energy Management Entities and System Architecture}

  This paper considers two energy management entities:

 \subsubsection{MES Operator}

 Energy systems have been tightly coupled nowadays, leading to the blooming of integrated energy service companies that manage several types of energy concurrently. Therefore, this paper assumes that an operator is responsible for optimal operation within the multi-energy system. However, as the interconnected network among IMESs cannot be monitored by individual MESs, each MESs' optimized solutions might not meet the requirement of secure IMESs operation.

  \subsubsection{System Coordinator}

  To address the above issue, this paper assumes that an upper-level entity, referred to as system coordinator, is responsible for coordinating the IMESs and managing the operation of interconnected network. Meanwhile, the system coordinator also serves as an interface between IMESs and the utility grid, by strategically responding to dispatching signals.

  Under these assumptions, a collaborative autonomous optimization framework is proposed based on the TC mechanism for energy management of IMESs, as shown in Fig.\ref{fig_framework}. In the proposed framework, each MES optimizes to minimize its operational cost autonomously, and incentive/responsive signals are exchanged between MESs and the system coordinator to achieve a collaborative optimization. By this way, issues can be addressed effectively that are associated with individual MESs' information privacy and operation authority among different management entities.

  \begin{figure}
    \centering
    \includegraphics[width=0.48\textwidth]{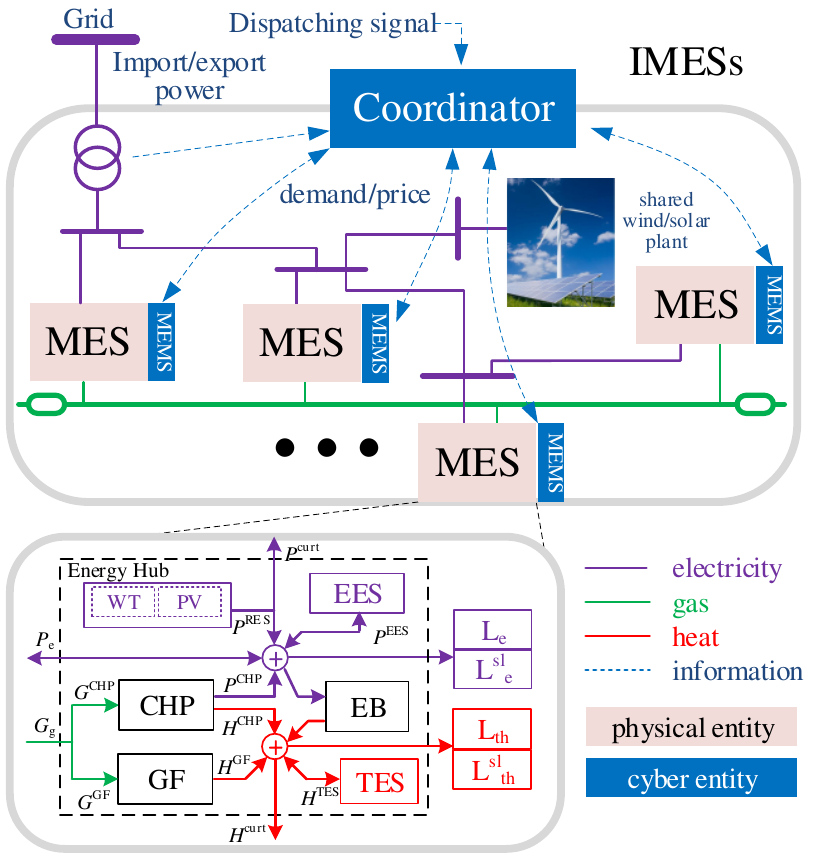}
    \caption{Energy management architecture for interconnected MESs.}
    \label{fig_framework}
  \end{figure}

  \subsection{General Assumptions}
  The following assumptions are made in this paper:
  \begin{itemize}
  \item The gas price remains unchanged throughout a day \cite{CHEN2018403}, since the gas price usually varies much slower than the electricity price.
  \item The real-time electricity prices are announced one day ahead or can be forecasted accurately.
  \item The forecast errors of RES output and load demand follow the normal distributions with parameters known beforehand.
  \end{itemize}

  \section{Autonomous Convex Optimization of MES}

  \subsection{MES Model}
  The lower part of Fig.\ref{fig_framework} exemplifies the structure of an MES consisting of a gas turbine combined heat and power (CHP) unit, a natural gas furnace (GF), an electric boiler (EB), an electric energy storage (EES) and a thermal energy storage (TES). Its model can be formulated as:
  \begin{equation}
    \begin{split}
    \label{eqn_MESmodel}
    \left(
      \begin{matrix}
      P_t + P_t^{\rm res}\\ 0
      \end{matrix}
    \right)
    +
    \left(
      \begin{matrix}
        \eta \rm _{ge}^{CHP} & 0 \\
        \eta {\rm _{gth}^{CHP}} & \eta \rm{_{gth}^{GF}}
      \end{matrix}
    \right)
    \left(
      \begin{matrix}
         G_t^{\rm CHP} \\ G_t^{\rm GF}
      \end{matrix}
    \right)
    +
    \left(
      \begin{matrix}
        -1 \\ \eta_{\rm eth}^{\rm EB}
      \end{matrix}
    \right)P_t^{\rm EB}
    + \\
    \left(
      \begin{matrix}
        P_{\mathrm{dch},t}^{\rm EES} - P_{\mathrm{ch},t}^{\rm EES} \\
        H_{\mathrm{dch},t}^{\rm TES} - H_{\mathrm{ch},t}^{\rm TES}
      \end{matrix}
    \right)
    -
    \left(
      \begin{matrix}
        P_t^{\rm curt} \\ H_t^{\rm curt}
      \end{matrix}
    \right)
    =
    \left(
      \begin{matrix}
        L_{\mathrm{e},t}^{\rm sl} \\ L_{\mathrm{th},t}^{\rm sl}
      \end{matrix}
    \right)
    +
    \left(
      \begin{matrix}
        L_{\mathrm{e},t} \\ L_{\mathrm{th},t}
      \end{matrix}
    \right),
    \end{split}
  \end{equation}
  where $P_t$ denotes the electricity power MES imports from the main gird at time ${t}$, and $P_t<0$ implies that the MES sells surplus electricity to the gird;
  $P_t^{\rm res}$ represents the onsite RES power generated under the maximum power point tracking mode;
  $G_t^{\rm CHP}$ and $G_t^{\rm GF}$ denote natural gas consumed by CHP and furnace, respectively;
  $\eta \rm _{ge}^{CHP}$ and $\eta \rm _{gth}^{CHP}$ are the gas-electric and gas-thermal efficiencies of CHP, and $P_t^{\rm EB}$ denote electricity consumed by boiler;
  $\eta \rm_{gth}^{GF}, \eta \rm_{eth}^{EB}$ are efficiencies of the furnace and boiler, respectively;
  $P_{\mathrm{ch},t}^{\rm EES}$ and $P_{\mathrm{dch},t}^{\rm EES}$ represent the charging and discharging power of EES, respectively, while $H_{\mathrm{ch},t}^{\rm TES}$ and $H_{\mathrm{ch},t}^{\rm TES}$ are the charging and discharging amount of TES, respectively;
  $P_t^{\rm curt}$ and $ H_t^{\rm curt}$ represent the curtailed RES and heat;
  $ L_{\mathrm{e},t}^{\rm sl}$ and $ L_{\mathrm{th},t}^{\rm sl}$ denote the shiftable electric load and thermal load, while $L_{\mathrm{e},t}$ and $L_{\mathrm{th},t}$ denote the fixed electric load and thermal load, respectively.

  \subsection{Formulation of Optimization Problem}
  During every scheduling period, after updating forecasts of local RES output and load demand, each MES seeks to minimize its expected cost across the remaining periods in an autonomous manner.
   
  The operational cost during period $t$ can be split into two parts: the electricity purchasing cost and the gas purchasing cost,
  \begin{equation}
    F_t = \mu _{\mathrm{e},t} P_t +
    \mu _{\mathrm{g},t}
    \left(G_{\mathrm{g},t}^{\rm CHP}
    + G_{\mathrm{g},t}^{\rm GF}\right),
  \end{equation}
  where $\mu _{\mathrm{e},t}$ and $\mu _{\mathrm{g},t}$ denote electricity price under the real-time pricing (RTP) scheme and natural gas price at period $t$, respectively.

  In addition to power balance constraints \eqref{eqn_MESmodel}, other constraints include:
  \begin{enumerate}
  \item The capacity constraints of electricity:
    \begin{equation}
      \label{eqn_electricity}
      -\overline P^{\rm out} \leq P_t \leq \overline P^{\rm in}, \forall t,
    \end{equation}
  where $\overline P^{\rm out} , \overline P^{\rm in} > 0$ denote the maximum exchange powers through the connecting line.

  \item The electric capacity limits of CHP, furnace and boiler:
  \begin{eqnarray}
    &\underline P^{\rm CHP} \leq P_t^{\rm CHP} = \eta_{\rm ge}^{\rm CHP} G_t^{\rm CHP} \leq \overline P^{\rm CHP}, \forall t \\
    &\underline P^{\rm GF} \leq P_t^{\rm GF} = \eta_{\rm gth}^{\rm GF} G_t^{\rm GF} \leq \overline P^{\rm GF}, \forall t\\
    &\underline P^{\rm EB} \leq P_t^{\rm EB} \leq \overline P^{\rm EB}, \forall t,
    \end{eqnarray}
    where $\overline P^{\rm CHP}$, $\overline P^{\rm GF}$ and $\overline P^{\rm EB}$ denote the installed capacity of the CHP, furnace and boiler respectively; $\underline P^{\rm CHP}$, $\underline P^{\rm GF}$ and $\underline P^{\rm EB}$ denote lower limits of corresponding electric power.
  \item The ramping constraints of CHP and boiler:
  \begin{eqnarray}
    |P_{t+1}^{\rm CHP} - P_{t}^{\rm CHP}| \leq \Delta P^{\rm CHP}, \forall t\\
    |P_{t+1}^{\rm EB} - P_{t}^{\rm EB}| \leq \Delta P^{\rm EB}, \forall t,\label{eqn_GF}
  \end{eqnarray}
  where $\Delta P^{\rm CHP}$ and $\Delta P^{\rm EB}$ denote the hourly ramping rate of CHP and boiler.
  \item The maximum charging/discharging power constraints:
  \begin{eqnarray}
    \label{eqn_EES_power_limits}
    \left\{
      \begin{aligned}
        0\leq P_{\mathrm{ch},t}^{\rm EES} \leq \overline P_{\rm ch}^{\rm EES}, \forall t \\
    0 \leq P_{\mathrm{dch},t}^{\rm EES} \leq \overline P_{\rm dch}^{\rm EES}, \forall t
      \end{aligned}
      \right.\\
    \label{eqn_TES_power_limits}
    \left\{
      \begin{aligned}
        0 \leq H_{\mathrm{ch},t}^{\rm TES} \leq \overline H_{\rm ch}^{\rm TES}, \forall t \\
        0 \leq H_{\mathrm{dch},t}^{\rm TES} \leq \overline H_{\rm dch}^{\rm TES}, \forall t,
      \end{aligned}
    \right.
  \end{eqnarray}
  where $\overline P_{\rm ch}^{\rm EES}$,$\overline P_{\rm dch}^{\rm EES}$ denote the maximum charging and discharging power of the EES, and $\overline H_{\rm ch}^{\rm TES}$, $\overline H_{\rm dch}^{\rm TES}$ denote the maximum charging and discharging quantity of the TES, respectively.

  \item The mutual exclusiveness of charging and discharging mode of EES and TES:
    \begin{eqnarray}
      \label{eqn_EES}
      P_{\mathrm{ch},t}^{\rm EES}P_{\mathrm{dch},t}^{\rm EES}=0, \forall t \\
      \label{eqn_TES}
      H_{\mathrm{ch},t}^{\rm TES}H_{\mathrm{dch},t}^{\rm TES}=0, \forall t.
    \end{eqnarray}

  \item Constraints associated with shiftable loads \cite{8052160}:
  \begin{eqnarray}
    \label{eqn_shiftable_load_e}
    &\sum_{t\in \Omega_{\rm e}} {L_{\mathrm{e},t}^{\rm sl}}=L_{\rm e}^{\rm sl}\\
    &0 \leq L_{\mathrm{e},t}^{\rm sl} \leq \overline L_{\mathrm{e}}^{\rm sl}, \forall t\in  \Omega_{\rm e}\\\
    &\sum_{t\in \Omega_{\rm th}}{L_{\mathrm{th},t}^{\rm sl}}=L_{\rm th}^{\rm sl}\\
    & 0 \leq L_{\mathrm{th},t}^{\rm sl} \leq \overline L_{\mathrm{th}}^{\rm sl}, \forall t \in \Omega_{\rm th},
    \label{eqn_shiftable_load_h}
  \end{eqnarray}
  where $L_{\rm e}^{\rm sl}$ and $L_{\rm th}^{\rm sl}$ denote the total shiftable electric and heat load within the scheduling day, respectively; $\overline L_{\mathrm{e}}^{\rm sl}$ and $\overline L_{\mathrm{th}}^{\rm sl}$ are upper limits of the shiftable load; $\Omega_{\rm th}$ and  $\Omega_{\rm e}$ are feasible time intervals of shiftable electric load and thermal load, respectively.

  \item Upper limits of RES curtailment (which happens when the feed-in power hits the limit of the connecting line caused by too much local RES) and heat curtailment (which happens when the local heat output exceeds the local demand):
  \begin{eqnarray}
    \label{eqn_curtailment}
    0 \leq P_t^{\rm curt} \leq P_t^{\rm res},\forall t\\
    \label{eqn_curtailment_h}
    0 \leq H_t^{\rm curt},\forall t.
  \end{eqnarray}

  \item The upper and lower energy bounds of EES and TES, for $\forall t$:
    \begin{eqnarray}
      \label{eqn_EES_SOC}
    \underline E^{\rm EES} \leq (1-\alpha^{\rm EES})E_{\tc}^{\rm EES}
    + \sum_{\tau =\tc}^t \Delta E_{\tau}^{\rm EES} \leq  \overline E^{\rm EES}\\
    \label{eqn_TES_SOC}
    \underline E^{\rm TES} \leq (1-\alpha^{\rm TES} E_{\tc}^{\rm TES}
    + \sum_{\tau =\tc}^t \Delta  E{\tau}^{\rm TES} \leq  \overline E^{\rm TES}
  \end{eqnarray}
  where $\underline E^{\rm EES}, \underline E^{\rm TES}, \overline E^{\rm EES}, \overline E^{\rm TES}$ denote the minimum and maximum energy of EES and TES, respectively;
  $E_{\tc}^{\rm EES}$, $E_{\tc}^{\rm TES}$ denote the energy of the EES and TES at current period $\tc$; $\alpha^{\rm EES}$, $\alpha^{\rm TES}$ are the self-discharge rate of EES and TES; $\Delta E_t^{\rm EES}$ and $\Delta E_t^{\rm TES}$ are the net energy change of the EES and TES during the period $t$, which can be calculated respectively as:
  \begin{eqnarray}
    \label{eqn_EES_delta_energy}
    &\Delta E_t^{\rm EES}=\T\left(P_{\mathrm{ch},t}^{\rm EES}\eta_{\rm ch}^{\rm EES}- \frac{P_{\mathrm{dch},t}^{\rm EES}}{\eta_{\rm dch}^{\rm EES}}\right)\\
    &\Delta E_t^{\rm TES}=\T\left(H_{\mathrm{ch},t}^{\rm TES}\eta_{\rm ch}^{\rm TES}- \frac{H_{\mathrm{dch},t}^{\rm TES}}{\eta_{\rm dch}^{\rm TES}}\right),
  \end{eqnarray}
  where $\T$ denotes the scheduling period; $\eta_{\rm ch}^{\rm EES}$, $\eta_{\rm dch}^{\rm EES}$ , $\eta_{\rm ch}^{\rm TES}$ and $\eta_{\rm dch}^{\rm TES}$ denote charging and discharging efficiencies of EES and TES, respectively.

  \item The target energy constraints of storages to avoid end-of-horizon effects:
  \begin{eqnarray}
    \label{eqn_target_SOC_EES}
    E_{\tc}^{\rm EES} + \sum_{\tau =\tc}^{\te} \Delta E_{\tau}^{\rm EES} = E_{\rm targ}^{\rm EES}\\
    \label{eqn_target_SOC_TES}
    E_{\tc}^{\rm TES} + \sum_{\tau =\tc}^{\te} \Delta E_{\tau}^{\rm TES} = E_{\rm targ}^{\rm TES},
  \end{eqnarray}
  \end{enumerate}
  where $\te$ denotes the end period of optimization horizon; $E_{\rm targ}^{\rm EES}$ and $E_{\rm targ}^{\rm TES}$ are the target energy of EES and TES at the end of the optimization horizon.

  The autonomous optimization problem at $\tc$ for each MES can thus be formulated as:
  \begin{align}
    \label{eqn_auto_optimization}
    \begin{split}
      \textbf{P1: } & \min \sum_{t=\tc}^{\te}F_t\\
      & \mathrm{s.t.} \;\; \eqref{eqn_MESmodel}, \eqref{eqn_electricity}-\eqref{eqn_target_SOC_TES}.
    \end{split}
  \end{align}

  \subsection{Equivalent Energy Change Transform}
As can be seen, the problem \textbf{P1} is non-convex due to nonlinear terms in constraints \eqref{eqn_EES} and \eqref{eqn_TES}. This section endeavors to propose a method to relax these nonlinear constraints, such that \textbf{P1} can be convexified as:
  \begin{align}
    \label{eqn_auto_optimization_relaxed}
    \begin{split}
     \textbf{P2: } &\min \sum_{t=\tc}^{\te}F_t\\
      & \mathrm{s.t.} \;\;
      \eqref{eqn_MESmodel},
      \eqref{eqn_electricity}-
      \eqref{eqn_TES_power_limits},
      \eqref{eqn_shiftable_load_e}-
      \eqref{eqn_target_SOC_TES}.
    \end{split}
  \end{align}

  The relationship between optimal solutions of problem \textbf{P1} and \textbf{P2} is discussed below. Without loss of generality, we only discuss the constraint \eqref{eqn_EES}, i.e., the mutual exclusiveness of charging/discharging mode of the EES, and the same method is also applicable to the constraint \eqref{eqn_TES} for the TES.

  Let the feasible regions of  \textbf{P1} and \textbf{P2} be $K_1$, $K_2$, optimal solution vectors be $ X_1^*$, $X_2^*$, and optimal values be $f(X_1^*)$, $f(X_2^*)$, respectively. Besides, variables with a superscript * denote corresponding optimal values in $X_2^*$. For instance, $P_{\mathrm{ch},t}^{\rm EES*}$ and $P_{\mathrm{dch},t}^{\rm EES*}$ denote the optimal charging/discharging power of the EES in $X_2^*$ at time $t$.

  The following theorem provides a sufficient condition for equivalency of these two optimal solutions:

  \newtheorem{theorem}{Theorem}
  \begin{theorem}
    \label{thm_normal_mode}
    If no RES need to be curtailed, i.e., $P_t^{curt*}=0$, then $P_{\mathrm{ch},t}^{\rm EES*}P_{\mathrm{dch},t}^{\rm EES*} = 0, \forall t$ holds.
  \end{theorem}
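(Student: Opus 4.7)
I will argue by contradiction with a local perturbation: assume that at some period $t^{\star}$ we have $P_{\mathrm{ch},t^{\star}}^{\rm EES*}>0$ and $P_{\mathrm{dch},t^{\star}}^{\rm EES*}>0$ simultaneously, and construct a feasible point $\tilde X$ of \textbf{P2} with strictly smaller objective, contradicting the optimality of $X_2^{\star}$. The guiding observation is that the round-trip inefficiency $\eta_{\rm ch}^{\rm EES}\eta_{\rm dch}^{\rm EES}<1$ turns simultaneous operation into a strict waste of stored energy, and the hypothesis $P_t^{\rm curt*}=0$ is precisely what rules out the degenerate regime in which this waste could have been absorbed for free by extra RES curtailment.

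Concretely, let $\epsilon=\min(P_{\mathrm{ch},t^{\star}}^{\rm EES*},P_{\mathrm{dch},t^{\star}}^{\rm EES*})>0$ and build $\tilde X$ from $X_2^{\star}$ by lowering both $P_{\mathrm{ch},t^{\star}}^{\rm EES}$ and $P_{\mathrm{dch},t^{\star}}^{\rm EES}$ by $\epsilon$, keeping every other variable fixed for now. Only the difference $P_{\mathrm{dch}}^{\rm EES}-P_{\mathrm{ch}}^{\rm EES}$ enters the electrical balance \eqref{eqn_MESmodel}, so that equation still holds at $t^{\star}$, and the box bounds \eqref{eqn_EES_power_limits} are preserved by the choice of $\epsilon$. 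Substituting into \eqref{eqn_EES_delta_energy} produces a stored-energy surplus $\delta \defeq \T\,\epsilon\,\bigl(1/\eta_{\rm dch}^{\rm EES}-\eta_{\rm ch}^{\rm EES}\bigr)>0$ at $t^{\star}$, which propagates through \eqref{eqn_EES_SOC} to every $s\ge t^{\star}$ and in particular breaks the terminal equality \eqref{eqn_target_SOC_EES} by exactly $\delta$.

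To restore \eqref{eqn_target_SOC_EES} I release the surplus at a later period $s^{\star}\in(t^{\star},\te]$ via the coupled update $\tilde{P}_{\mathrm{dch},s^{\star}}^{\rm EES}=P_{\mathrm{dch},s^{\star}}^{\rm EES*}+\epsilon(1-\eta_{\rm ch}^{\rm EES}\eta_{\rm dch}^{\rm EES})$ together with a decrease of $\tilde{P}_{s^{\star}}$ by the same amount so that \eqref{eqn_MESmodel} at $s^{\star}$ keeps holding. The only resulting change in the objective is then $-\mu_{\mathrm{e},s^{\star}}\,\epsilon(1-\eta_{\rm ch}^{\rm EES}\eta_{\rm dch}^{\rm EES})$, which is strictly negative once $\mu_{\mathrm{e},s^{\star}}>0$. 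The hypothesis enters here through the KKT system of \textbf{P2}: stationarity in $P_t^{\rm curt}$ combined with complementary slackness at $P_t^{\rm curt*}=0$ (assuming $P_t^{\rm res}>0$) fixes the electrical shadow price $\lambda_t\le 0$, and the stationarity conditions for $P_{\mathrm{ch},t^{\star}}^{\rm EES}$ and $P_{\mathrm{dch},t^{\star}}^{\rm EES}$ under the simultaneous-operation assumption force the aggregated energy shadow $\Psi_{t^{\star}}\ge 0$ together with $\lambda_{t^{\star}}\ge\Psi_{t^{\star}}\eta_{\rm ch}^{\rm EES}\ge 0$, so stationarity in $P_{t^{\star}}$ yields $\mu_{\mathrm{e},t^{\star}}=-\lambda_{t^{\star}}\ge 0$, with strict positivity under any ordinary price assumption closing the contradiction.

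The hard part is feasibility of the compensating perturbation: $\tilde{P}_{\mathrm{dch},s^{\star}}^{\rm EES}$ must stay inside \eqref{eqn_EES_power_limits}, the perturbed trajectory must remain in $[\underline E^{\rm EES},\overline E^{\rm EES}]$ under \eqref{eqn_EES_SOC}, and $\tilde{P}_{s^{\star}}$ inside \eqref{eqn_electricity}. I would absorb the strict-interior case by shrinking $\epsilon$, and default to $s^{\star}=\te$ otherwise, where \eqref{eqn_target_SOC_EES} itself guarantees slack for the last discharge step. The same argument transplanted to $H_{\mathrm{ch},t}^{\rm TES},H_{\mathrm{dch},t}^{\rm TES}$ with the counterparts \eqref{eqn_TES_power_limits}, \eqref{eqn_TES_SOC} and \eqref{eqn_target_SOC_TES} handles the TES exclusivity constraint \eqref{eqn_TES}.
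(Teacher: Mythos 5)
Your overall strategy (contradiction via a perturbation that exploits the round-trip inefficiency $\eta_{\rm ch}^{\rm EES}\eta_{\rm dch}^{\rm EES}<1$) is the right one, but the specific perturbation you chose creates obligations you do not discharge. By cancelling $\epsilon$ from both $P_{\mathrm{ch},t^{\star}}^{\rm EES}$ and $P_{\mathrm{dch},t^{\star}}^{\rm EES}$ you keep the bus balance \eqref{eqn_MESmodel} at $t^{\star}$ intact but perturb the stored energy by $\delta>0$, so the whole downstream trajectory in \eqref{eqn_EES_SOC} and the terminal equality \eqref{eqn_target_SOC_EES} move, and you must find a later period where the surplus can be both discharged and monetized. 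That is exactly where the argument is incomplete: if the state of charge sits at $\overline E^{\rm EES}$ immediately after $t^{\star}$, no positive $\epsilon$ is feasible and ``defaulting to $s^{\star}=\te$'' is unavailable, so the surplus would have to be released at $t^{\star}$ itself --- which changes the balance at $t^{\star}$ and is precisely the case your construction was designed to avoid; similarly, an active $\overline P_{\rm dch}^{\rm EES}$ or an active export bound $-\overline P^{\rm out}$ in \eqref{eqn_electricity} at every candidate $s^{\star}$ cannot be cured by shrinking $\epsilon$. A symptom that something is off is that your main perturbation never actually uses the hypothesis $P_t^{\rm curt*}=0$ --- yet the theorem is false without it (the paper's Case Study III exhibits optimal \textbf{P2} solutions with simultaneous charge/discharge under curtailment). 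The appended KKT paragraph does not repair this: it derives a sign for $\mu_{\mathrm{e},t^{\star}}$, whereas your cost decrease lives at $s^{\star}$, and mixing a stationarity argument into a primal perturbation proof leaves neither one complete.

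The paper's proof avoids all of this by perturbing in the orthogonal direction: it replaces $(P_{\mathrm{ch},t^{\star}}^{\rm EES*},P_{\mathrm{dch},t^{\star}}^{\rm EES*})$ by the single-mode pair \eqref{eqn_transform} that produces the \emph{same} $\Delta E_{t^{\star}}^{\rm EES*}$, so the entire energy trajectory, the bounds \eqref{eqn_EES_SOC} and the terminal constraint \eqref{eqn_target_SOC_EES} are untouched, and the only effect is a strictly larger net discharge $\Delta P_{\mathrm{dch},t^{\star}}^{\rm EES}>0$ confined to period $t^{\star}$. The no-curtailment hypothesis then enters exactly once and load-bearingly: since $P_{t^{\star}}^{\rm curt*}=0$, the extra net discharge is not merely absorbed by curtailment but translates through \eqref{eqn_MESmodel} into a strictly smaller purchase $P_{t^{\star}}$, hence a strictly smaller objective --- contradiction. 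If you want to salvage your version, you would need to prove that a feasible release period $s^{\star}$ always exists under the hypothesis; it is considerably simpler to adopt the equal-energy-change perturbation so that no intertemporal compensation is needed at all.
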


   \begin{proof}
    To prove it by contradiction, suppose that $\exists t\in \left[\tc,\te\right]$, such that $P_{\mathrm{ch},t}^{\rm EES*} > 0,\;P_{\mathrm{dch},t}^{\rm EES*} > 0$. Denote the corresponding energy change of the EES as $\Delta E_t^{\rm EES*}$. Define the equivalent-energy-change (EEC) transform for the EES as:
  \begin{equation}
    \label{eqn_transform}
    \begin{split}
    \left(
      \widetilde P_{\mathrm{ch},t}^{\rm EES*}, \widetilde P_{\mathrm{dch},t}^{\rm EES*}
    \right)
    = \rm EEC(P_{\mathrm{ch},t}^{\rm EES*}, P_{\mathrm{dch},t}^{\rm EES*}) \\
    \triangleq \begin{cases}
      \left(\frac{\Delta E_t^{\rm EES*}}{\eta_{\rm ch}^{\rm EES}\T},\;0 \right),\Delta E_t^{\rm EES*}\geq 0 \\
      \left( 0,\; \frac{-\Delta E_t^{\rm EES*}\eta_{\rm dch}^{\rm EES}}{\T} \right),\Delta E_t^{\rm EES*}<0.
    \end{cases}
    \end{split}
  \end{equation}

  It is easy to verify that the new pair of charging/discharging power $(\widetilde P_{\mathrm{ch},t}^{\rm EES*}, \widetilde P_{\mathrm{dch},t}^{\rm EES*})$ leads to a same energy change of the EES as $\Delta E_t^{\rm EES*}$ during period $t$.

  The difference of EES's net discharge power can thus be calculated as:
  \begin{equation}
    \label{eqn_EES_delta_power}
    \begin{split}
        \Delta &P_{\mathrm{dch},t}^{\rm EES} =
        \left(
          \widetilde P_{\mathrm{dch},t}^{\rm EES*} - \widetilde P_{\mathrm{ch},t}^{\rm EES*}
          \right) -
          \left(
            P_{\mathrm{dch},t}^{\rm EES*} - P_{\mathrm{ch},t}^{\rm EES*}
            \right)\\
          &=\begin{cases}
            \left(
\left. 1 \right/ \eta_{\rm dch}^{\rm EES}\eta_{\rm ch}^{\rm EES} -1
            \right)
            P_{\mathrm{dch},t}^{\rm EES*},\Delta E_t^{\rm EES*}\geq 0 \\
        \left(
          1 - \eta_{\rm dch}^{\rm EES}\eta_{\rm ch}^{\rm EES}
        \right) P_{\mathrm{ch},t}^{\rm EES*},\Delta E_t^{\rm EES*} < 0
          \end{cases}\\
         &> 0,
        \end{split}
  \end{equation}
  which means that compared with $(P_{\mathrm{dch},t}^{\rm EES*},P_{\mathrm{ch},t}^{\rm EES*})$, an EES controlled according to $(\widetilde P_{\mathrm{dch},t}^{\rm EES*},\widetilde P_{\mathrm{ch},t}^{\rm EES*})$ always consumes less power (or discharge more power). Since there is no RES curtailment, according to \eqref{eqn_MESmodel} the system coordinator could purchase less electricity from the main grid by implementing the new charging/discharging power without breaking power balance. This contradicts with the fact that $X_2^*$ is an optimal solution of \textbf{P2}.
  \end{proof}

   According to Theorem \ref{thm_normal_mode}, without energy curtailment, the model has no incentive to charge and discharge simultaneously \cite{BECK2016331} and therefore $X_2^*=X_1^*$. Actually, references \cite{BECK2016331, 8486723} have conducted similar relaxation steps, but the precondition as well as the proof are not provided.

 However, in case local renewable energies are abundant, surplus power should be curtailed and this precondition is no longer established. In such case, apply the EEC transform and use the following substitutions in $X_2^*$:
  \begin{equation}
    \label{eqn_transformation}
    \begin{aligned}
      (P_{\mathrm{ch},t}^{\rm EES*},P_{\mathrm{dch},t}^{\rm EES*})
      \stackrel{\text{\eqref{eqn_transform}}}{\Longrightarrow}&
      (\widetilde P_{\mathrm{ch},t}^{\rm EES*},\widetilde P_{\mathrm{dch},t}^{\rm EES*}),\\
      P_t^{\rm curt*}
      \stackrel{\text{\eqref{eqn_EES_delta_power}}}{\Longrightarrow}&
      \widetilde P_t^{\rm curt*}=P_t^{\rm curt*}+ \Delta P_{\mathrm{dch},t}^{\rm EES},
    \end{aligned}
  \end{equation}
  and denote the new solution vector as $\widetilde X_2^*$.

  \begin{remark}
  \label{rem:transform}
   The EEC transform has no impact on the energy trajectory of an EES. The difference in the net discharge power caused by the transform is balanced by $\widetilde P_t^{\rm curt*}$, thus the exchanged electricity power $P_t^*$ of an MES is also kept unchanged.
  \end{remark}

  The sufficient condition for optimality of $\widetilde X_2^*$ is given in the following theorem:
  \begin{theorem}
    \label{thm_extreme_mode}
    If the optimal solution $X_2^*$ of \textbf{P2}  satisfies:
    \begin{equation}
      \label{eqn_condition_constraint}
      \frac{P_t^{\rm res} - P_t^{\rm curt*}}{1-\eta \rm _{ch}^{EES}\eta \rm _{dch}^{EES}} \geq \min (P_{\mathrm{ch},t}^{\rm EES*},\frac{P_{\mathrm{dch},t}^{\rm EES*}}{\eta \rm _{ch}^{EES}\eta \rm _{dch}^{EES}}),
    \end{equation}
    then $\widetilde X_2^*$ is an optimal solution of \textbf{P1}.
  \end{theorem}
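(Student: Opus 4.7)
The plan is to show that $\widetilde X_2^*$ is feasible for \textbf{P1} and attains the same objective value as $X_2^*$; combined with $K_1\subseteq K_2$, this forces $\widetilde X_2^*$ to be optimal for \textbf{P1}. By construction of the EEC map \eqref{eqn_transform}, at every $t$ exactly one of $\widetilde P_{\mathrm{ch},t}^{\rm EES*}$ and $\widetilde P_{\mathrm{dch},t}^{\rm EES*}$ is zero, so \eqref{eqn_EES} holds automatically; and Remark \ref{rem:transform} says that both the stored-energy trajectory and the grid exchange $P_t^*$ are preserved, which immediately carries feasibility of \eqref{eqn_MESmodel}, \eqref{eqn_electricity}, \eqref{eqn_EES_SOC} and \eqref{eqn_target_SOC_EES} from $X_2^*$ to $\widetilde X_2^*$. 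The bounds \eqref{eqn_EES_power_limits} for the new pair follow from a short monotonicity check, since $\widetilde P_{\mathrm{ch},t}^{\rm EES*}\leq P_{\mathrm{ch},t}^{\rm EES*}$ when $\Delta E_t^{\rm EES*}\geq 0$ and $\widetilde P_{\mathrm{dch},t}^{\rm EES*}\leq P_{\mathrm{dch},t}^{\rm EES*}$ when $\Delta E_t^{\rm EES*}<0$.

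The crux is the curtailment bound \eqref{eqn_curtailment}, which requires $\widetilde P_t^{\rm curt*}=P_t^{\rm curt*}+\Delta P_{\mathrm{dch},t}^{\rm EES}\leq P_t^{\rm res}$. Non-negativity of $\widetilde P_t^{\rm curt*}$ is immediate since $\Delta P_{\mathrm{dch},t}^{\rm EES}\geq 0$ in \eqref{eqn_EES_delta_power} (recall $\eta_{\rm ch}^{\rm EES}\eta_{\rm dch}^{\rm EES}<1$). For the upper bound I would split by the sign of $\Delta E_t^{\rm EES*}$ and rearrange \eqref{eqn_EES_delta_power}, which turns the required inequality into
\[
  \tfrac{P_{\mathrm{dch},t}^{\rm EES*}}{\eta_{\rm ch}^{\rm EES}\eta_{\rm dch}^{\rm EES}} \leq \tfrac{P_t^{\rm res}-P_t^{\rm curt*}}{1-\eta_{\rm ch}^{\rm EES}\eta_{\rm dch}^{\rm EES}}
\]
when $\Delta E_t^{\rm EES*}\geq 0$, and into
\[
  P_{\mathrm{ch},t}^{\rm EES*} \leq \tfrac{P_t^{\rm res}-P_t^{\rm curt*}}{1-\eta_{\rm ch}^{\rm EES}\eta_{\rm dch}^{\rm EES}}
\]
when $\Delta E_t^{\rm EES*}<0$. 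The key observation is that the sign of $\Delta E_t^{\rm EES*}$ is precisely the comparison between $P_{\mathrm{ch},t}^{\rm EES*}$ and $P_{\mathrm{dch},t}^{\rm EES*}/(\eta_{\rm ch}^{\rm EES}\eta_{\rm dch}^{\rm EES})$, so in each branch the $\min$ on the right-hand side of the hypothesis \eqref{eqn_condition_constraint} equals the active left-hand side above, and the hypothesis delivers exactly the required bound in both cases simultaneously.

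Feasibility of $\widetilde X_2^*$ for \textbf{P1} being established, the cost $F_t$ depends only on $P_t$, $G_t^{\rm CHP}$ and $G_t^{\rm GF}$, all left untouched by the EEC transform, so $f(\widetilde X_2^*)=f(X_2^*)$. Since $K_1\subseteq K_2$, one has $f(X_1^*)\geq f(X_2^*)=f(\widetilde X_2^*)\geq f(X_1^*)$, which forces equality and yields optimality. I expect the main obstacle to be the algebraic identification in the second paragraph: verifying that the single $\min$ expression in \eqref{eqn_condition_constraint} really reproduces both branch-specific inequalities and that the sign of $\Delta E_t^{\rm EES*}$ selects the active one. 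Every other step is routine given Remark \ref{rem:transform} and the structure of the EEC map.
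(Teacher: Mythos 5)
Your proposal is correct and follows essentially the same route as the paper's Appendix A proof: verify feasibility of $\widetilde X_2^*$ for \textbf{P1} constraint by constraint (with the curtailment bound \eqref{eqn_curtailment} being the only place where hypothesis \eqref{eqn_condition_constraint} is needed), note that the EEC transform leaves the objective unchanged, and close with the sandwich $f(X_1^*)\geq f(X_2^*)=f(\widetilde X_2^*)\geq f(X_1^*)$. If anything you are slightly more explicit than the paper, which only works out the $\Delta E_t^{\rm EES*}\geq 0$ branch and asserts the other by symmetry, whereas you spell out why the sign of $\Delta E_t^{\rm EES*}$ selects the active argument of the $\min$ in \eqref{eqn_condition_constraint} in both cases.
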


  \begin{proof}
  See Appendix A.
  \end{proof}

  \begin{remark}
  The extreme circumstance when \eqref{eqn_condition_constraint} is unsatisfied is assumed to never occur due to the optimal planning procedure of energy systems in practice. As a consequence, the problem \textbf{P1} can be solved by solving a relaxed program \textbf{P2}. Although the EEC transform may be applied to the solution of \textbf{P2} as illustrated in Fig. \ref{fig_EEC_Flowchart}, it will not be explicitly stated in the subsequent models according to Remark \ref{rem:transform}.
  \end{remark}

  \begin{figure}
    \centering
    \includegraphics[width=0.4\textwidth]{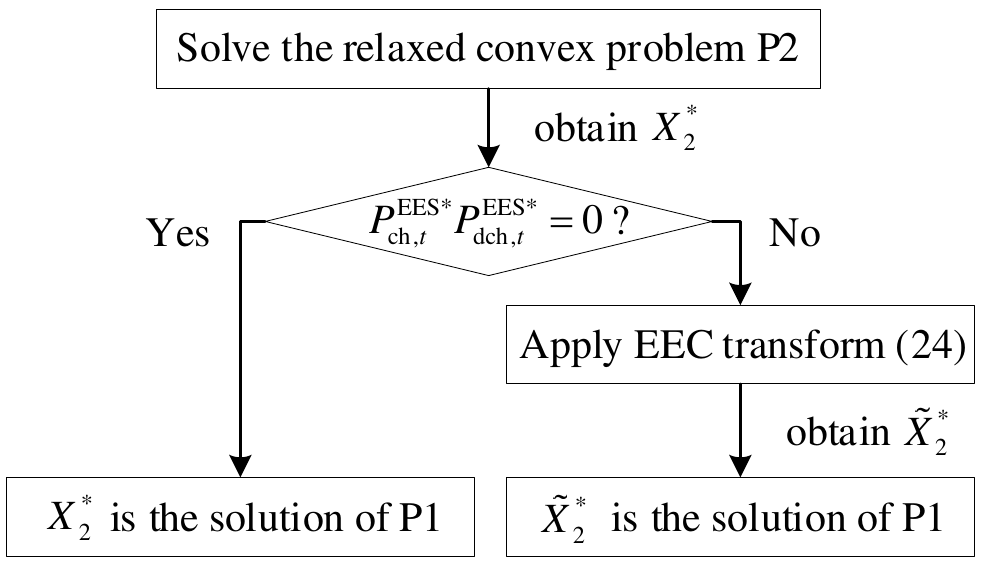}
    \caption{Applying the EEC transform}\label{fig_EEC_Flowchart}
  \end{figure}

  \section{Transactive Collaborative Optimization}


  \subsection{Bi-level Decomposition of Optimization Problem}
  During each period, the system coordinator aims at minimizing overall costs of IMESs across the remaining periods while keeping supply demand balanced and operation constraints satisfied. The optimization problem at $\tc$ can be modeled as follows:
  \begin{align}
    \label{eqn_col_optimization}
    \begin{split}
      \textbf{P3: }& \min \; \sum_{n=1}^{N}\sum_{t=\tc}^{\te}F_{t,n}\\
      & \mathrm{s.t.}  \;\; {\sum_{n=1}^{N}P_{t,n}}=P_t^{\rm Tr} + P_t^{\rm RES},\forall t\\
      & -\overline P^{\rm Tr,out}\leq P_t^{\rm Tr} \leq \overline P^{\rm Tr, in},\forall t\\
      & \text{
      \eqref{eqn_MESmodel}$_n$,
      \eqref{eqn_electricity}$_n$-
      \eqref{eqn_TES_power_limits}$_n$,
      \eqref{eqn_shiftable_load_e}$_n$-
      \eqref{eqn_target_SOC_TES}$_n$
      }, \forall n,
    \end{split}
\end{align}
  where $n$ is the index for MES; $N$ is total number of IMESs; $P_t^{\rm RES}$ is the total shared RES power (wind or solar) as shown in Fig \ref{fig_framework}; $P_t^{\rm Tr}$ is the imported power of the main transformer, and $\overline P^{\rm Tr, in},\overline P^{\rm Tr,out} \geq 0$ are the maximum exchange powers, determined either by the physical limit of the transformer or dispatch signal such as a peak-shaving request.

  The problem \textbf{P3} should have been solved in a centralized manner after gathering detailed information from all MESs. However, to preserve information privacy, this paper advocates to solve it in a distributed TE framework.

  The Lagrangian relaxed dual problem of \textbf{P3} is \cite{4118456}:
  \begin{align}
    \label{eqn_dual_problem}
    \begin{split}
      \textbf{P4: }&\;\;\max_{\forall t,\lambda _t} \varphi (\lambda_t)= \max \mathrm{inf}\, L\\
    &\mathrm{s.t.} -\overline P^{\rm Tr,out}\leq P_t^{\rm Tr} \leq \overline P^{\rm Tr, in},\forall t\\
    &\text{
      \eqref{eqn_MESmodel}$_n$,
      \eqref{eqn_electricity}$_n$-
      \eqref{eqn_TES_power_limits}$_n$,
      \eqref{eqn_shiftable_load_e}$_n$-
      \eqref{eqn_target_SOC_TES}$_n$
      }, \forall n,
    \end{split}
  \end{align}
  where $L$ is the Lagrangian function after introducing the Lagrange multipliers $\lambda_{\tc},\lambda_{t_{\rm c}+1},...,\lambda_{\te}$ associated with the power balance constraint:
  \begin{equation}
    \label{eqn_Lagrange_function}
    L={\sum_{n=1}^{N}\sum_{t=\tc}^{\te}F_{t,n}}
    +\sum_{t=\tc}^{\te}\lambda _t{(\sum_{n=1}^{N}P_{t,n}}-P{_t^{\rm Tr}}-P_t^{\rm RES}).
  \end{equation}

  Since the primal problem \textbf{P3} is linear, strong duality holds and the optimal value of \textbf{P4} is equivalent to that of \textbf{P3}.

  Under the given $\lambda_t$, the dual problem \textbf{P4} can be decomposed into $N+1$ subproblems (SP), corresponding to $N$ MESs:
  \begin{equation}
    \label{eqn_EH_SP}
    \begin{split}
      \forall n: \min &\sum_{t=\tc}^{\te}{\left[(\mu_{\mathrm{e},t}+\lambda_t)P_{t,n}+\mu_{\mathrm{g},t}(G_{\mathrm{g},t,n}^{\rm CHP}
    + G_{\mathrm{g},t,n}^{\rm GF}) \right]}\\
    &\mathrm{s.t.} \text{
      \eqref{eqn_MESmodel}$_n$,
      \eqref{eqn_electricity}$_n$-
      \eqref{eqn_TES_power_limits}$_n$,
      \eqref{eqn_shiftable_load_e}$_n$-
      \eqref{eqn_target_SOC_TES}$_n$
     },
  \end{split}
  \end{equation}
  and the transformer \cite{8274219}:
  \begin{equation}
    \label{eqn_TR_SP}
    \begin{split}
      &\min -\sum_{t=\tc}^{\te}\lambda_tP_t^{\rm Tr}\\
      \mathrm{s.t.} -&\overline P^{\rm Tr,out}\leq P_t^{\rm Tr} \leq \overline P^{\rm Tr, in},\forall t.
    \end{split}
  \end{equation}

  If the Lagrangian multiplier $\lambda_t$ in \eqref{eqn_EH_SP}, \eqref{eqn_TR_SP} is interpreted as price, a local price signal $\lambda _{\mathrm{e},t}$ can then be defined as:
  \begin{equation}
    \label{eqn_local_price}
    \lambda _{\mathrm{e},t} =\mu _{\mathrm{e},t} + \lambda _t.
  \end{equation}

  Thus, the multiplier $\lambda_t$ indicates the offset of the local electricity price $\lambda_{\mathrm{e},t}$ to the RTP price $\mu_{\mathrm{e},t}$, caused by the transformer congestion.

  Substitute \eqref{eqn_local_price} into \eqref{eqn_EH_SP} and we have:
  \begin{equation}
    \label{eqn_EH_SP_final}
    \begin{split}
      \forall n \textbf{ SP$_n$: } \min &\sum_{t=\tc}^{\te}{\left[\lambda_{\mathrm{e},t}P_{t,n}+\mu_{\mathrm{g},t}(G_{\mathrm{g},t,n}^{\rm CHP}
      + G_{\mathrm{g},t,n}^{\rm GF}) \right]}\\
      &\mathrm{s.t.} \text{
        \eqref{eqn_MESmodel}$_n$,
        \eqref{eqn_electricity}$_n$-
        \eqref{eqn_TES_power_limits}$_n$,
        \eqref{eqn_shiftable_load_e}$_n$-
        \eqref{eqn_target_SOC_TES}$_n$ .
        }
    \end{split}
  \end{equation}

  \textbf{SP$_n$} is exactly the individual optimization problem \textbf{P2} of MES$_n$ established in section III, except for that $\mu_{\mathrm{e},t}$ is replaced by $\lambda_{\mathrm{e},t}$.

  Similarly, substitute \eqref{eqn_local_price} into \eqref{eqn_TR_SP} and then decompose \eqref{eqn_TR_SP} into each control period, the subproblem \textbf{SP$_{N+1}$} is stated as:
  \begin{equation}
    \label{eqn_TR_SP_final}
    \begin{split}
    \textbf{SP$_{N+1}$: }\forall t, \min -(\lambda_{\mathrm{e},t}-&\mu_{\mathrm{e},t})P_t^{\rm Tr}\\
    \mathrm{s.t.} -\overline P^{\rm Tr,out}\leq &P_t^{\rm Tr} \leq \overline P^{\rm Tr, in}.
    \end{split}
  \end{equation}

  Obviously, the optimal solution of \textbf{SP$_{N+1}$} is:
  \begin{equation}
    \label{eqn_TR_SP_solution}
    \forall t, P_t^{\rm Tr}=
    \begin{cases}
      \overline P^{\rm Tr, in},&\lambda_{\mathrm{e},t}>\mu_{\mathrm{e},t}\\
      \forall,&\lambda_{\mathrm{e},t}=\mu_{\mathrm{e},t}\\
      -\overline P^{\rm Tr, out},&\lambda_{\mathrm{e},t}<\mu_{\mathrm{e},t},
    \end{cases}
  \end{equation}
  which indicates that when the local price is higher (lower) than the RTP price, the transformer purchases (sells) electricity from (to) the main grid as much as possible \cite{8274219} and vice versa.

  By far, problem \textbf{P3} can be solved in a bi-level framework. That is to say, at the upper level, the system coordinator adjusts the local price vector to strike a general balance between supply and demand. At the lower level, each MES autonomously minimizes its cost under the price vector. This process usually requires a large number of iterations. Denote the price vector in the $k$-th iteration by:
  \begin{equation}
    \label{eqn_forecast_price}
      \vec\Lambda_{\tc}^k = \left\{
      \lambda _{\rm e,\tc}^k,
      \lambda _{\rm e,\tc+1}^k,
      \dots,
      \lambda _{\mathrm{e},\te}^k
      \right\}\,.
  \end{equation}

  In each iteration, each MES solves \textbf{SP$_n$} with the given price vector $\vec\Lambda_{\tc}^k$ and bids the optimal power vector $\vec{P}_n^{k*}$, and the transformer bids the optimal power vector $\vec{P}^{\mathrm{Tr},k*}$ by solving \textbf{SP$_{N+1}$}. After receiving all biddings, the system coordinator computes the balance vector $\Delta \vec{P}^k = -\vec{P}^{\mathrm{Tr},k*} + \sum_{n=1}^{N}{\vec{P}_n^{k*}}$, and updates the price vector $\vec\Lambda_{\tc}^{k+1} = \vec\Lambda_{\tc}^k+\eta^k\Delta \vec{P}^k$, where $\eta^k$ denotes a feasible step length. These steps are repeated until the balance is achieved. In this paper, the above method is referred to as subgradient-based rolling TC (SG-RTC), since it adopts the dual subgradient method in a rolling optimization horizon.

  \begin{figure*}
    \centering
    \includegraphics[width=0.9\textwidth]{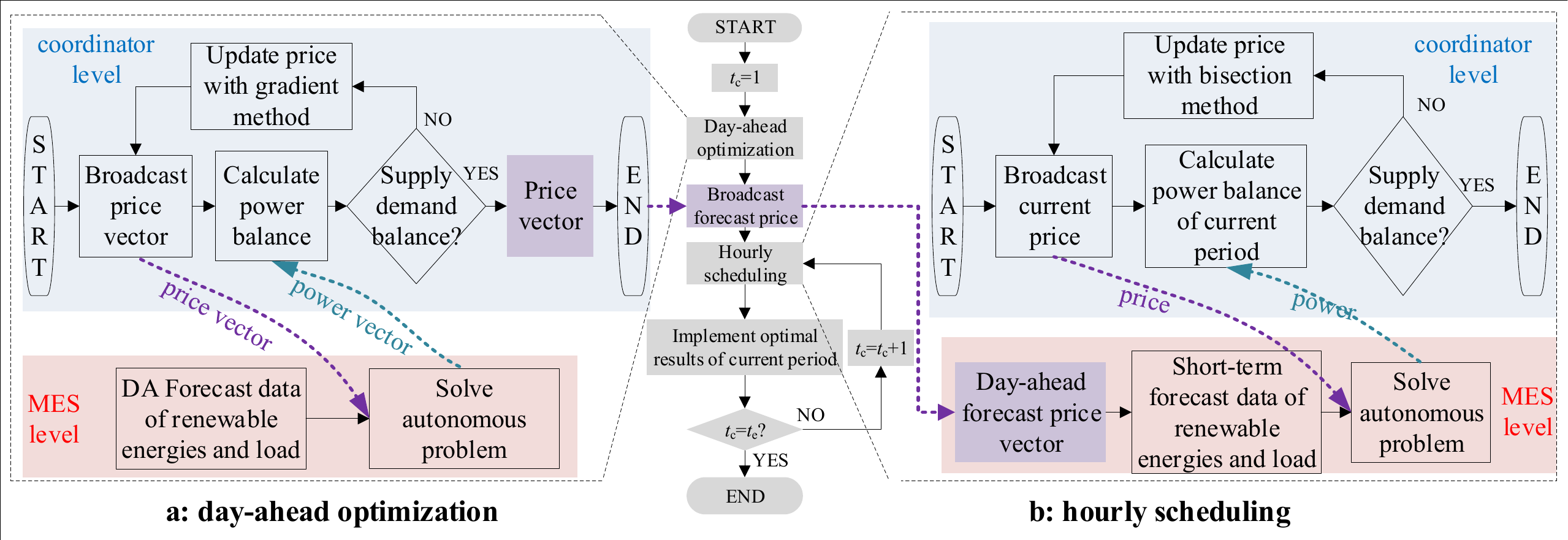}
    \caption{Operation framework of 2S-TC: day-ahead optimization (a) and intra-day hourly scheduling (b).}
    \label{fig_framework_two_stage}
  \end{figure*}

  \begin{table*}[]
    \caption{Comparisons between SG-RTC and 2S-TC}
    \label{table_difference}
    \centering
    \begin{tabular}{cc|lllll}
      \toprule
    \multicolumn{2}{c|}{TC} & main purpose & decision variable(s) & update method for multipliers & execution frequency \\
      \hline
    \multicolumn{2}{c|}{SG-RTC} & hourly dispatching & price vector & subgradient method & rolling (every period) \\
    \hline
    \multirow{2}{*}{2S-TC} & day-ahead stage & predicting next-day clearing prices & price vector  & subgradient method & once (non-rolling) \\
    & hourly stage & hourly dispatching & only current price & bisection method  & rolling (every period) \\
    \bottomrule
    \end{tabular}
  \end{table*}

  \subsection{Two-stage Optimization}

  Although the bi-level decomposition method significantly improves scalability, a large number of iterations required might be impractical for hourly-scheduling. To address this problem, a two-stage TC (abbreviated as 2S-TC) is proposed that splits the SG-RTC into two stages as illustrated in Fig.\ref{fig_framework_two_stage}. An day-ahead optimization is first implemented to forecast local electricity prices of the next day. Then the intra-day rolling optimization is implemented in hourly-scheduling to further deal with RES and loads' uncertainties. These two stages are compared in Table \ref{table_difference}, and the SG-RTC method proposed in \cite{8486723, 8501585, 8017431} is also compared in the table.

  \subsubsection{Day-ahead Stage}
  This stage is illustrated in Fig.\ref{fig_framework_two_stage}(a). Like the SG-RTC, the day-ahead stage also adopts the subgradient method to solve the intertemporal problem iteratively. However, since the purpose is to forecast local electricity prices of the next day, the rolling optimization strategy is not employed in this stage. Denote the forecast price vector of the next day in the $k$-th iteration by:
  \begin{equation}
    \label{eqn_forecast_price}
    \hat{\vec\Lambda} ^k = \left\{
      \hat\lambda _{\rm e,1}^k,
      \hat\lambda _{\rm e,2}^k,
      \dots,
      \hat\lambda _{\mathrm{e},\tc}^k,
      \dots,
      \hat\lambda _{\mathrm{e},\te}^k
      \right\}\,.
  \end{equation}

  Then a similar iterative process is implemented until the balance is achieved, and the final forecast price vector is denoted as $\hat{\vec\Lambda}$, which will be used in the hourly stage.

  \subsubsection{Hourly Stage}
  This stage aims to perform a hourly power adjustment based on ultra-short-term forecasts of RES and load, as illustrated in Fig.\ref{fig_framework_two_stage}(b).

  The idea of rolling optimization is employed in this stage. At period $\tc$, it is assumed that day-ahead forecasts of local electricity prices for future periods, i.e., $t_{\rm c}+1,t_{\rm c}+2,\dots,\te$, are perfect, and only the electricity price for the current period needs to be updated. Since now there is only one decision variable, a one-dimension search method such as bisection algorithm \cite{NumericalAnalysis} can be used. The bisection search space is denoted as $(\underline\lambda_{\rm e}, \overline\lambda_{\rm e})$ where $\underline\lambda_{\rm e}, \overline\lambda_{\rm e}$ are minimum and maximum prices respectively, and the search process is described as follows:

  S0: The system coordinator broadcasts the forecast price vector obtained in the day-ahead stage to all MESs:
  \begin{equation}
    \hat{\vec\Lambda} = \left\{
      \hat\lambda _{\rm e,1},
      \hat\lambda _{\rm e,2},
      \dots,
      \hat\lambda _{\mathrm{e},\tc},
      \dots,
      \hat\lambda _{\mathrm{e},\te}
      \right\}.
  \end{equation}

  S1: At the $p$-th iteration of period $\tc$, the coordinator broadcasts the price $\lambda_{\mathrm{e},\tc}^p$.

  S2: Each MES then generates the $p$-th price vector locally:
  \begin{equation}
    \label{eqn_RT_price}
    \vec\Lambda_{\tc}^p = \left\{
      \lambda_{\mathrm{e},\tc}^p,
      \overbrace{\hat\lambda _{\mathrm{e},t_{\rm c}+1},
      \dots,
      \hat\lambda _{\mathrm{e},\te}}^\text{day-ahead forecast prices}
      \right\},
  \end{equation}
 then solves its own subproblem \textbf{SP$_n$} and bids the optimal power, denoted as $P_{\tc,n}^{p*}$, to the coordinator.

 Meanwhile, the transformer bids its optimal power, denoted as $P_{\tc}^{\mathrm{Tr},p*}$, according to \eqref{eqn_TR_SP_solution}.

  S3: The coordinator calculates the power balance of the current period after receiving all bidding data:
  \begin{equation}
    \label{eqn_balance}
    \Delta P_{\tc}^p =
     -P_{\tc}^{\mathrm{Tr},p*}
     +\sum_{n=1}^{N}{P_{\tc,n}^{p*}}.
  \end{equation}

  S4: If $|\Delta P_{\tc}^p|$ is smaller than a predefined threshold $\zeta$, then the system converges and steps into S5. Otherwise, the coordinator updates  $\lambda_{\mathrm{e},\tc}^p$ according to the bisection method and steps back to S1.

  S5: Each MES implements local control according to its last bid. The above procedures are then repeated for the next control period.

  \section{Case Study}
  Three case studies will be conducted in this section. Case study I aims to compare the proposed 2S-TC with SG-RTC in terms of accuracy and scalability. Case study II will then focus on the 2S-TC framework. In the final case study, the EEC transform's effectiveness will be verified.

  Some common parameters are listed below. The control period is 1h. The maximum and minimum electricity price in the market are 1.0 and 0.2 yuan/kWh, respectively. The real-time prices are obtained from the PJM website \cite{pjmprice}. The price of natural gas is 3.3 yuan/m$^3$ \cite {8245724}. The day-ahead, intra-day and real-time forecast errors of RES are $\pm 30\%$, $\pm 10\%$ and $\pm 5\%$, respectively; the day-ahead, intra-day and real-time forecast errors of load are $\pm 20\%$, $\pm8\%$ and $\pm 3\%$, respectively \cite{20171225002}.

  \subsection{Case Study I: Comparisons of SG-RTC and 2S-TC}

  To verify performance of the proposed framework, this study conducts several large cases. Components in these cases will follow uniform distributions with parameters listed in Table \ref{table_component_parameters_case1}. The rolling horizon problem \textbf{P3} is solved with both SG-RTC and the proposed 2S-TC. As compared in Table \ref{table_difference}, prices of all remaining periods are iteratively updated with the subgradient method in SG-RTC, while the proposed 2S-TC would update the price of current interval in hourly-scheduling.

  \begin{table}[bt]
    \caption{Case Study I: Parameter ranges of components}
      \label{table_component_parameters_case1}
    \begin{tabular}{c|c|c|c|c|c}
    \toprule
    & parameter & value & & parameter & value
    \\ \hline \multirow{6}{*}{CHP} & capacity(MW) & 0-3 & \multirow{6}{*}{EES} & capacity(MWh) & 0-3
    \\ & \multirow{2}{*}{$\eta\rm _{gth}^{CHP}/\eta\rm _{gth}^{CHP}$} & \multirow{2}{*}{1-1.5} & & C-rate(C) & 0.1-0.3
    \\ & & & & $\rm E_{targ}^{EES}$(\%) & 15-50
    \\ & $\eta {\rm _{ge}^{CHP}}$(\%) & 25-40  & & $\underline E^{\rm EES}$, $\overline E^{\rm EES}$(\%) & 10,85
    \\ & $\underline P^{\rm CHP}$(\%) & 25-35 & & $\eta \rm_{ch}^{EES},\eta \rm_{dch}^{EES}$(\%) & 90
    \\ & $\Delta P^{\rm CHP}$(\%/h) & 30-50 & & $\alpha^{\rm EES}$(\%/d) & 0
    \\ \hline \multirow{2}{*}{GF} & installed heat & \multirow{2}{*}{0-1} &\multirow{5}{*}{TES} & capacity(MWh) & 0-3
    \\ & capacity(MW) & & & C-rate(C) & 0.1-0.3
    \\ & $\eta {\rm_{gth}^{GF}}$(\%) & 80-90 & & $\rm E_{targ}^{TES}$(\%) & 50-90
    \\ \cline{1-3} \multirow{3}{*}{EB} & capacity(MW) & 0.3-2 & & $\underline E^{\rm TES}$, $\overline E^{\rm TES}$(\%) & 10-90
    \\ & $\eta {\rm _{eth}^{EB}}$(\%) & 98 & & $\eta \rm_{ch}^{TES},\eta \rm_{dch}^{TES}$(\%) & 90
    \\ & $\Delta P^{\rm EB}$(\%/h) & 50 & & $\alpha^{\rm TES}$(\%/d) & 10
    \\ \bottomrule
    \end{tabular}
  \end{table}

  \subsubsection{Evaluation of Accuracy}
    A case consisting of 15 IMESs is considered here. Since SG-RTC's accuracy is guaranteed, it is viewed as a benchmark in this case. The overall cost is 247.18k yuan under 2S-TC and 247.17k yuan under the benchmark. Energy cost of each MES is listed in Table \ref{table_costs}. Results of the two methods are fairly close, suggesting that the proposed method is efficient in obtaining a rather optimal solution of the collaborative optimization.

  \begin{table}[]
    \centering
    \caption{Case Study I: Cost Comparisons of SG-RTC and 2S-TC (10$^3$ yuan)}
    \label{table_costs}
    \setlength{\tabcolsep}{1.2mm}{
    \begin{tabular}{@{}ccc|ccc|ccc@{}}
    \toprule
    MES & SG-RTC\tnote{*} & 2S-TC\tnote{**} & MES & SG-RTC & 2S-TC & MES & SG-RTC & 2S-TC \\
    \hline
    1   & 18.50 & 18.49 & 6  & 17.98 & 18.04 & 11 & 24.72 & 24.74 \\
    2   & 17.06 & 17.07 & 7  & 4.86 & 4.89 & 12 & 15.42 & 15.45 \\
    3   & 22.89 & 22.86 & 8  & 17.50 & 17.52 & 13 & 22.86 & 22.82 \\
    4   & 21.21 & 21.19 & 9  & 12.66 & 12.61 & 14 & 26.98 & 26.99 \\
    5   & 7.93  & 7.93 & 10 & 11.77 & 11.77 & 15 & 4.83 & 4.83 \\
    \bottomrule
    \end{tabular}}
  \end{table}

  \subsubsection{Evaluation of Scalability}
   To demonstrate scalability of the proposed method, cases with more IMESs are involved. The maximum and average number of iterations required by SG-RTC and 2S-TC during congestion periods are listed in Table \ref{table_iteration}. It can be concluded that the computational complexity will not increase significantly as the system scales up for both methods. However, 2S-TC method requires substantially fewer iterations to converge than SG-RTC in each period. Since the number of iterations corresponds to the communication costs between the system coordinator and MESs, and each MES is required to execute a local optimization for each iteration, 2S-TC can significantly improve control performances in terms of communication and computation requirements, especially considering the communication latency and distortion in practice.

\begin{table}[bt]
  \caption{Case Study I: Max (avg) number of iterations in hourly scheduling under SG-RTC and 2S-TC}
    \label{table_iteration}
    \centering
  \begin{tabular}{cccc}
  \toprule
  number of IMESs & 20 & 50 & 100 \\
  \midrule
  SG-RTC & 183 (88) & 183 (123) & 134 (98) \\
  2S-TC & 9 (6.5) & 9 (6.2) & 9 (6.5) \\
  \bottomrule
  \end{tabular}
  \end{table}

\subsection{Case study II: Evaluations of the 2S-TC}

  To facilitate detailed analysis of the collaborative autonomous optimization, a small system consisting of two residential MES (MES$_1$ and MES$_2$) and one commercial MES (MES$_3$) is simulated in this case. Parameters different from Table \ref{table_component_parameters_case1} are listed in Table \ref{table_component_parameters_case2}. The shared RES includes a 0.4 MW wind farm and a 0.3 MW solar farm. Profiles of fixed loads, shiftable loads and RES in a typical winter day are shown in Fig.\ref{fig_renewable_energy_load}. The shiftable electric load typically includes EVs, washing machines and dishwashers in residential MESs, as well as water heaters and disinfectors in the commercial MES. The power limit of the main transformer is 2.25MW. The connecting line limits of MES$_1$, MES$_2$ and MES$_3$ are 1.1 MW, 2.25 MW and 1.2 MW respectively.

  \begin{table}
    \centering
    \caption{Case Study II: Parameters of components}
      \label{table_component_parameters_case2}
    \begin{tabular}{|c|c|c|c|c|c|}
    \toprule
    \multirow{2}{*}{} & \multirow{2}{*}{parameter} & \multicolumn{3}{c|}{Case study II}
    \\ \cline{3-5} & & MES$_1$ & MES$_2$ & MES$_3$
    \\ \hline \multirow{5}{*}{CHP} & capacity(MW) & 1.5 &  \multirow{5}{*}{\textbackslash{}} & 4.0
    \\ \cline{2-3} \cline{5-5}& $\eta {\rm _{gth}^{CHP}}$(\%) & 42 & & 56
    \\ \cline{2-3} \cline{5-5}& $\eta {\rm _{ge}^{CHP}}$(\%) & 30 & & 28
    \\ \cline{2-3} \cline{5-5} & $\underline P^{\rm CHP}$(\%) & 30 & & 30
    \\ \cline{2-3} \cline{5-5} & $\Delta P^{\rm CHP}$(\%/h) & 40 & & 40
    \\ \hline \multirow{3}{*}{GF} & heat capacity(MW) & \multirow{3}{*}{\textbackslash{}} & 1.6 & \multirow{3}{*}{\textbackslash{}}
    \\ \cline{2-2} \cline{4-4} & $\eta {\rm_{gth}^{GF}}$(\%) & & 90 &
    \\ \cline{2-2} \cline{4-4} & $\underline P^{\rm GF}$(\%) & & 0 &
    \\ \hline \multirow{2}{*}{EB} & capacity(MW) & \multirow{2}{*}{\textbackslash{}} & 1 & \multirow{2}{*}{\textbackslash{}}
    \\ \cline{2-2} \cline{4-4} & $\underline P^{\rm EB}$(\%) & & 0 & 
    \\ \hline \multirow{3}{*}{EES} & capacity(MWh) & 1.6 & 1.5 &1.4
    \\ \cline{2-5} & C-rate(C) & 0.3 & 0.25 & 0.3
    \\ \cline{2-5} & $\rm E_{targ}^{EES}$(\%) &\multicolumn{3}{c|}{20}
    \\ \hline \multirow{3}{*}{TES} & capacity(MWh) & 1.2 & 1.2 & 1.4
    \\ \cline{2-5} & C-rate(C) & \multicolumn{3}{c|}{0.25}
    \\ \cline{2-5} & $\rm E_{targ}^{TES}$(\%) & \multicolumn{2}{c|}{60} & 50
    \\ \bottomrule
    \end{tabular}
  \end{table}

  \begin{figure}
    \centering
    \includegraphics[width=0.48\textwidth]{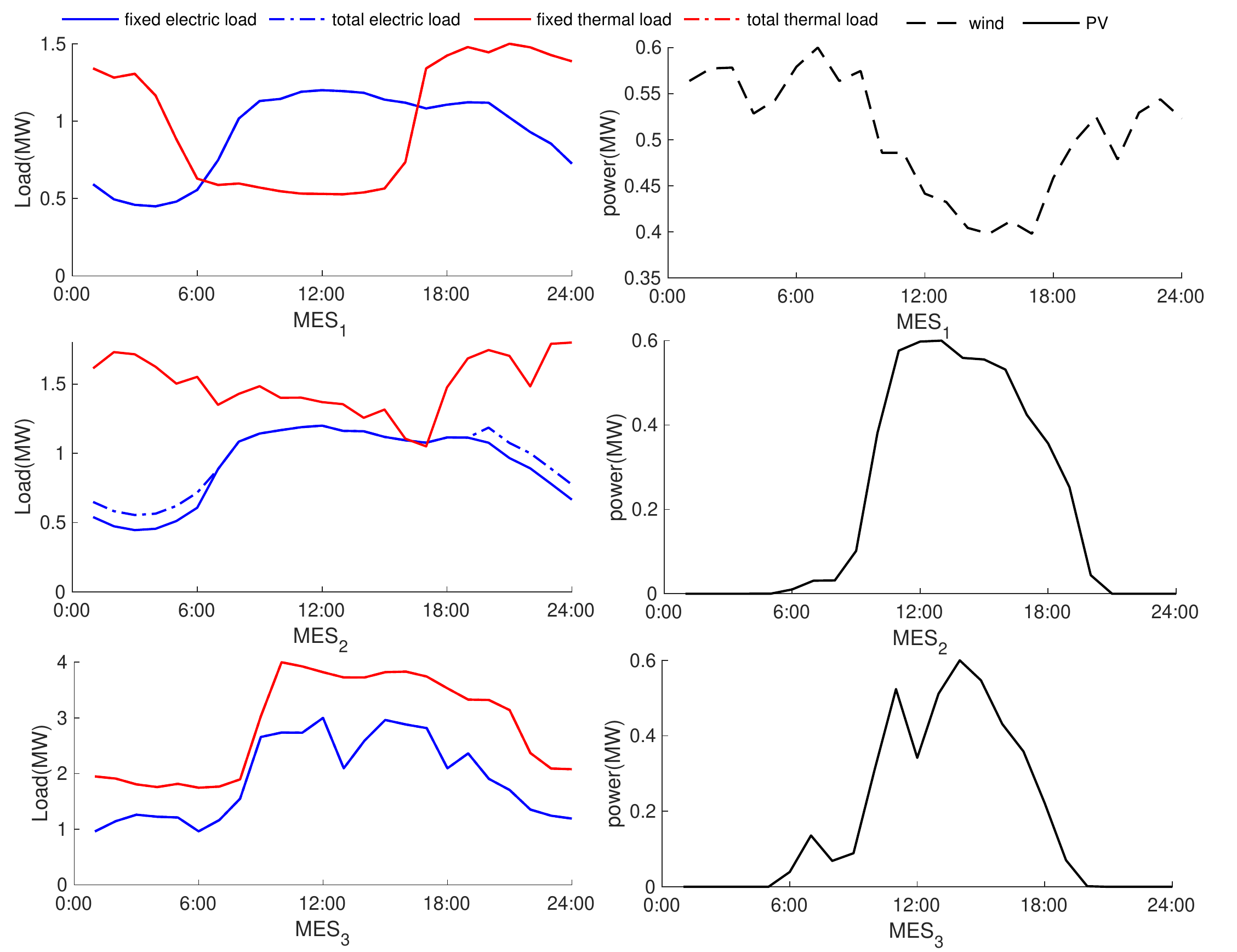}
    \caption{Case Study II: Curves of local loads and RES in each MES.}
    \label{fig_renewable_energy_load}
  \end{figure}

  For the convenience of analysis, the day is divided into two periods according to levels of price, i.e., valley-price hours (24:00-6:00) and peak-price hours (7:00-23:00).

\subsubsection{Effects of Collaborative Optimization}
  Three operation modes will be considered in this case study:
  \begin{enumerate}
    \item non-collaborative autonomous (NCA) mode: the IMESs are not coordinated by the system coordinator and each MES acts autonomously, i.e., each MES simply responds to RTP and self-optimizes according to \textbf{P1} \eqref{eqn_auto_optimization};
    \item collaborative autonomous (CA) mode: the autonomous MESs are coordinated under the 2S-TC framework. The whole IMESs system can both buy and sell electricity to maximize its profit. The feed-in electricity price is assumed to be same as RTP price;
    \item collaborative autonomous mode with feed-in limitation (CA-FIL): in contrast to CA mode, this mode aims to encourage local consumption of RES \cite{BECK2016331} and thus reduce the impact on the main power grid \cite{Feed-in-limitation}. This incentive purpose is usually achieved with the introduction of a feed-in price that is lower than the electricity price \cite{BECK2016331, 8501585}. In this mode the feed-in price is assumed to be zero.
  \end{enumerate}

  Simulation result of transformer power under these three modes are plotted in Fig.\ref{fig_transformer_3EH}. Total costs of the IMESs under three modes are 83.59k, 83.64k and 84.26k yuan respectively, while their RES accommodation rates are 87.69\%, 88.34\% and 100\%, respectively.

  In the NCA mode, it can be seen that the main transformer would suffer from overloading during 3:00-4:00 and 15:00-16:00, while the IMESs system would sell redundant RES back to the main grid during 10:00-11:00 and 18:00-20:00.

  After coordinating the IMESs in the CA mode, the main transformer is successfully protected from overloading. Meanwhile, power exportation through the main transformer can be observed in some periods to maximize the profit.

  In contrast, the CA-FIL mode can maximize local accommodation of RES while avoiding congestion through coordinating the IMESs. As a result, MESs are discouraged from arbitrage during 15:00-21:00 and a 100\% RES accommodation is reached in this condition. It should be noted that the IMESs' net demand are comparatively higher during valley hours and lower during peak hours in this mode. In other words, the overall electric demand pattern of the multi-energy systems contrasts with that of the main grid counterintuitively, as a response to the electricity price fluctuations.

  In the 2S-TC framework, the IMESs are coordinated using local price signals. To illustrate this, more details including the connecting line power of each MES are plotted in Fig.\ref{fig_MES_imported}.

  As illustrated in Fig.\ref{fig_MES_imported}(b), the CA mode will raise the clearing price above the RTP price when import congestion occurs during 2:00-4:00. As a result, MES$_2$ and MES$_3$ are discouraged from consuming electricity and would lower the EES charging power if possible during these hours, while MES$_1$ would increase the CHP output to make extra profit from selling electricity to other MESs. The congestion is finally relieved under the collaboration of all MESs successfully.

  In contrast,  while relieving the import congestion by raising the clearing price, CA-FIL mode will also cut down the clearing price when export congestion happens at 1:00, 9:00-12:00 and 23:00, as illustrated in Fig.\ref{fig_MES_imported}(c). Therefore, MES$_1$ and MES$_3$ are discouraged from generating much electricity through CHP during these hours and would lower the TES charging power correspondingly, while MES$_2$ is incentivized to consume more electricity to increase the overall RES accommodation within the IMESs.

  \begin{figure}
    \centering
    \includegraphics[width=0.48\textwidth]{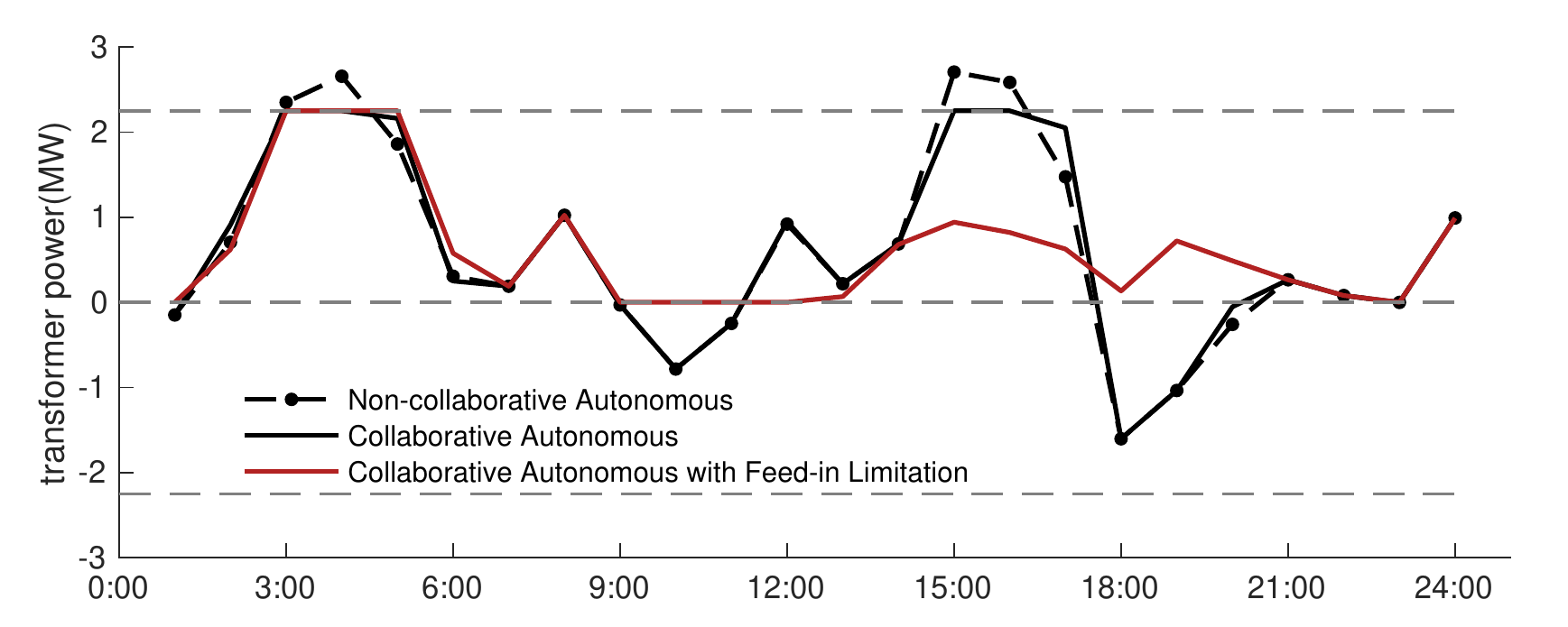}
    \caption{Case Study II: Transformer power under three modes.}
    \label{fig_transformer_3EH}
  \end{figure}

  \begin{figure}
    \subfigure[Non-collaborative Autonomous]{
    \includegraphics[width=0.47\textwidth]{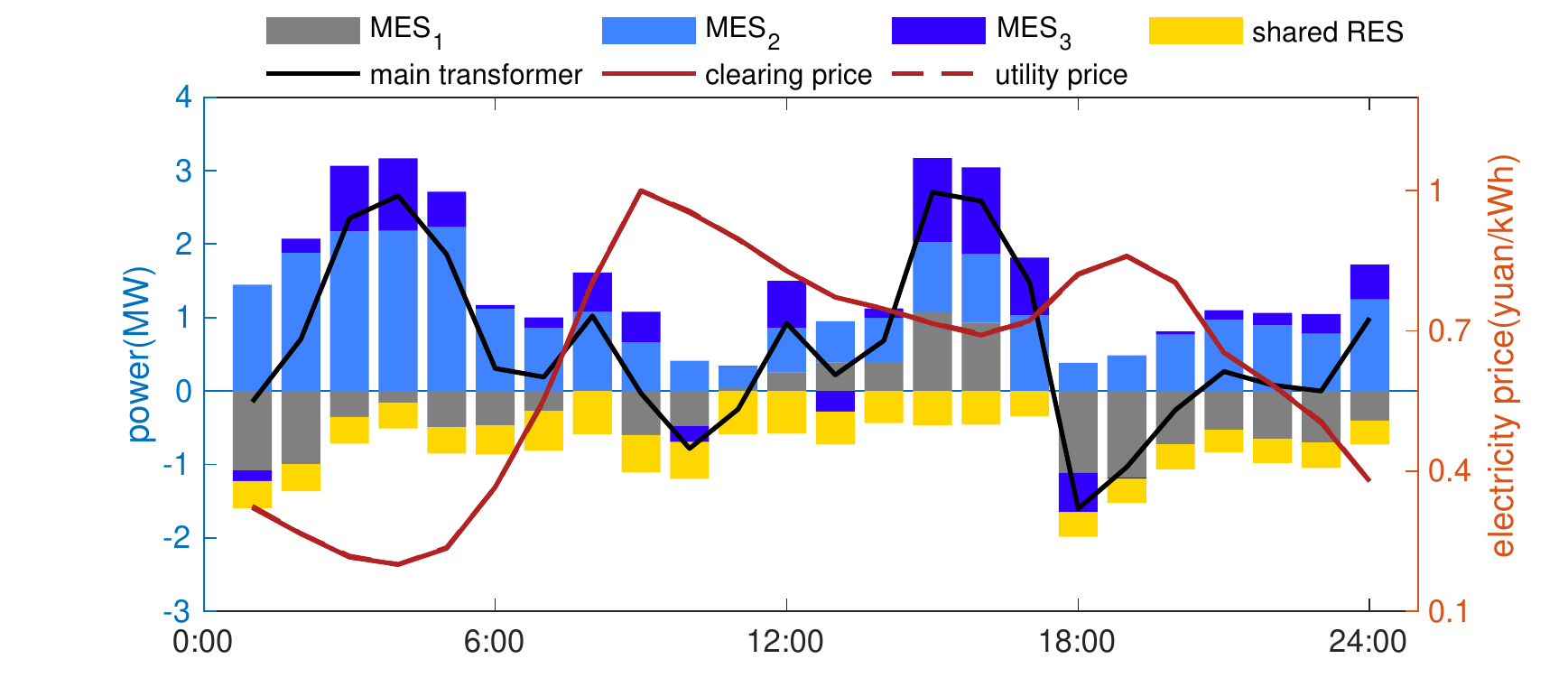}
    }
    \centering
    \subfigure[Collaborative Autonomous]{
    \includegraphics[width=0.47\textwidth]{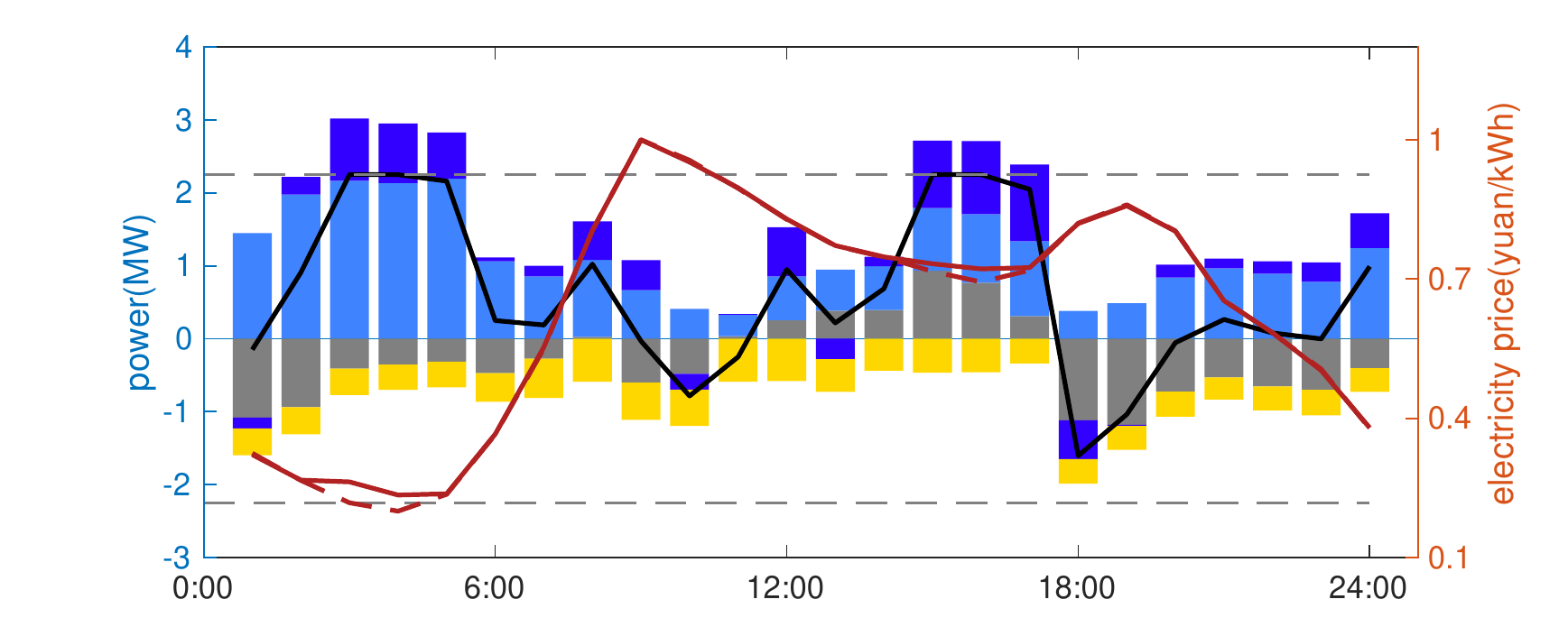}
    }
    \centering
    \subfigure[Collaborative Autonomous with Feed-in Limitation]{
    \includegraphics[width=0.47\textwidth]{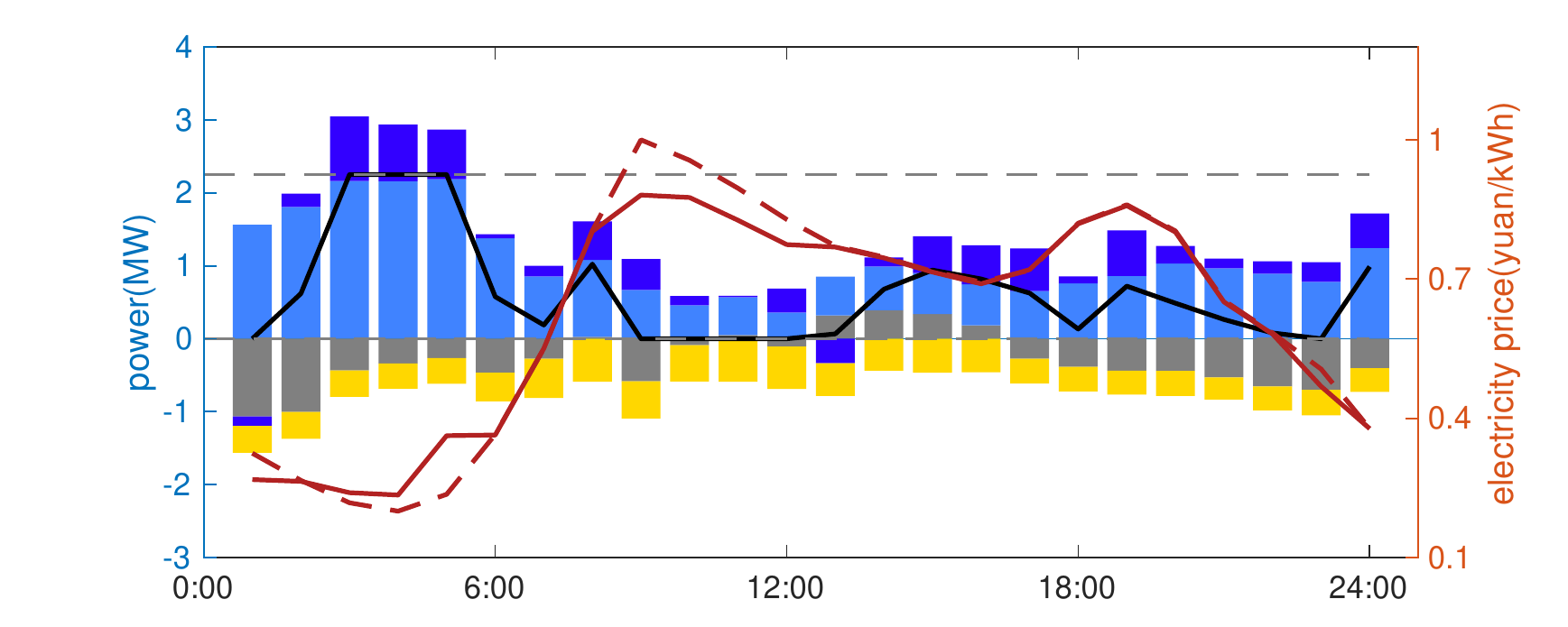}
    }
    \caption{Case Study II: Imported electricity of each MES and clearing price under three modes.}
    \label{fig_MES_imported}
  \end{figure}

\subsubsection{Effects of Autonomous Optimization}
  This section will focus on the MES-level self-optimization under CA-FIL mode. Results of all MESs are shown in Fig.\ref{fig_power_case2}. Note that for the bars in the figure, the generated power to supply demand is positive, while the exported or consumed power is negative. 

  \begin{figure}[htbp]
    \centering
    \subfigure[Electric power]{
      \includegraphics[width=0.45\textwidth]{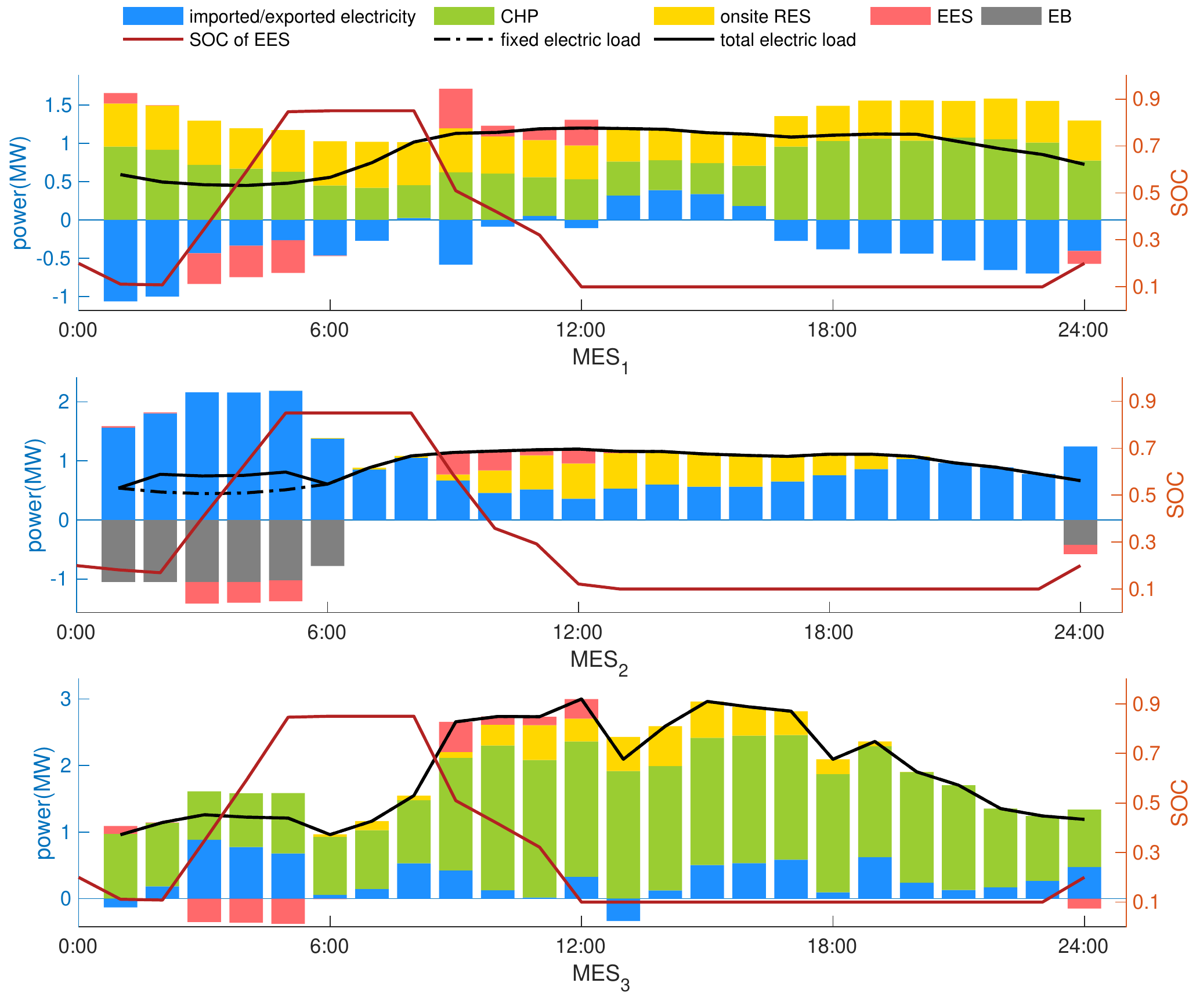}
    }
    \subfigure[Thermal power]{
      \includegraphics[width=0.45\textwidth]{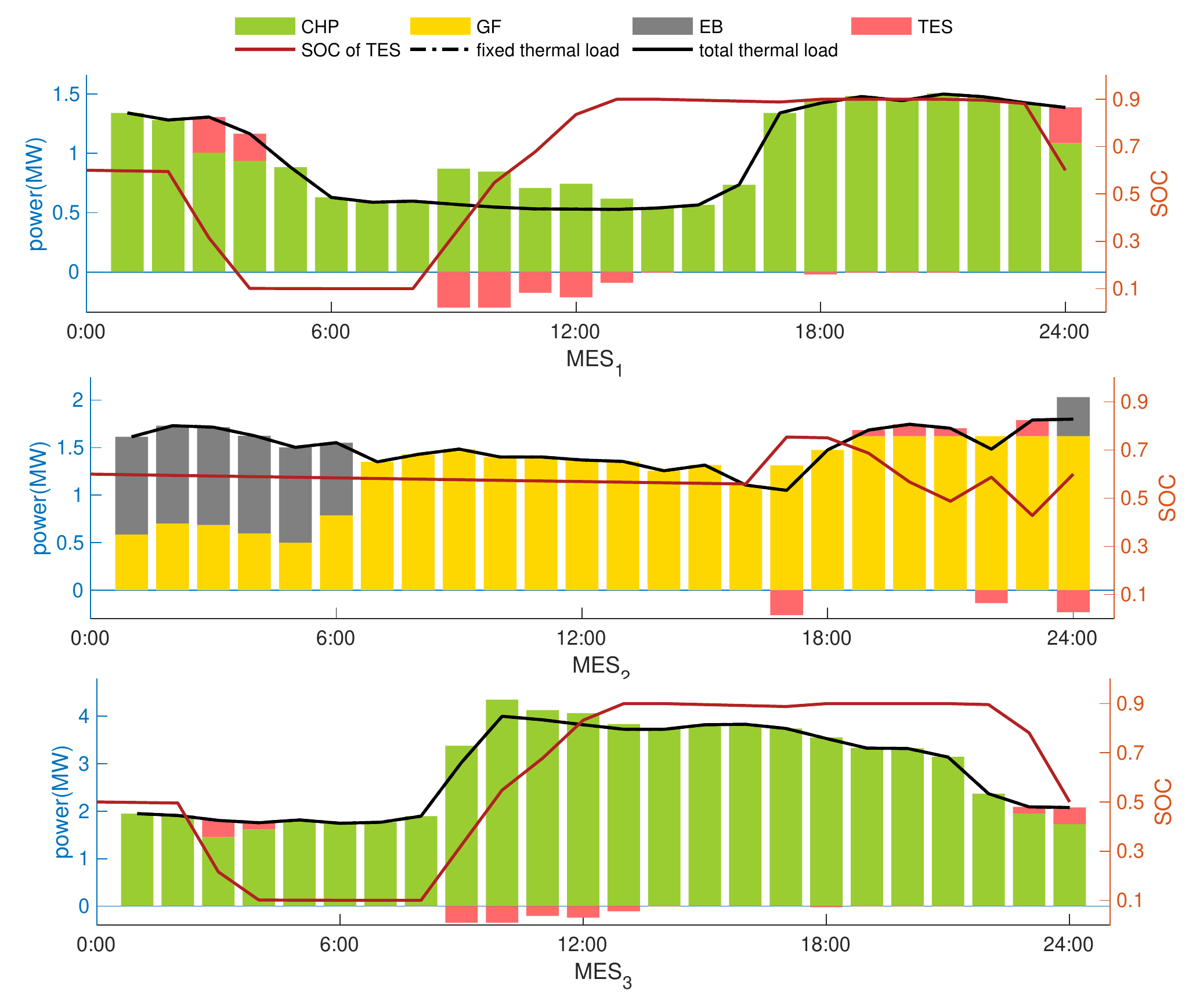}
    }
    \caption{Case Study II: Result of each MES' (a)electric and (b)thermal power.}
    \label{fig_power_case2}
  \end{figure}

  It can be seen from Fig.\ref{fig_power_case2} that the output level of CHP is determined by heat amount throughout the day in both MES$_1$ and MES$_3$. As a result, despite redundant wind resources at midnight in MES$_1$ and solar resources at noon in MES$_3$, excessive electricity are generated as by-product, so that these two MESs have to sell electricity to other MESs at that time. On the contrary, since no CHP is installed in MES$_2$, it has no alternative but to import electricity from the main grid to supply electric demand. Therefore, the electricity demand of MES$_2$ is comparatively higher all day long, especially during valley hours when heat are mainly supplied with the boiler. The following characteristics can thus be concluded from the simulation results:
  \begin{itemize}
    \item
    During valley hours, an MES tends to purchase cheap electricity from the main grid and supply heat with the boiler. During peak hours, it is more profitable to generate energy through CHPs and supply heat with the furnace.
    \item
    An MES tends to charge EES during valley hours and store electricity for peak hours, while a TES in MES installed with CHP (TES$_1$ and TES$_2$ in this case) tends to store thermal energy during peak hours and release heat during valley hours.
    \item
    The EES, TES and shiftable loads are coordinated such that the MES' net electric demand against thermal load matches the heat-to-electric ratio of CHP unit. On the one hand, it can minimize the use of furnace and reduce the cost of thermal energy during peak hours. On the other hand, under such conditions, curtailments of both electric and thermal power are minimized, thus the comprehensive efficiency of an MES is improved.
  \end{itemize}

To sum up, the complementary among multi-energy effectively help IMESs to better accommodate congestions caused by price fluctuation. It also gives full scope to each MES' optimization that aims to reduce its overall energy cost autonomously, while protecting the main transformer from overloading and increasing the RES accommodation, collaboratively.

\subsection{Case study III: Verification of the EEC Transform}

  To further verify the effectiveness of proposed EEC transform for EES, the installed capacity of wind turbines in one particular MES is intentionally enlarged in this section to arise wind curtailment at certain periods. Fig.\ref{fig_convex} plots optimal charging/discharging power of the EES obtained in \textbf{P1}, \textbf{P2} and (\textbf{P2}+EEC), which (\textbf{P2}+EEC) indicates the applying of EEC transformation on the optimal result of \textbf{P2} as illustrated in Fig. \ref{fig_EEC_Flowchart}. The optimal operational costs of MES obtained in both \textbf{P1} and (\textbf{P2}+EEC) are 15,512 yuan. The simulation results demonstrate that: (i) During time slots when no wind curtailment occurs, the optimal solution of \textbf{P2} always satisfies the mutual exclusiveness constraint, which verifies Theorem \ref{thm_normal_mode}. In contrast, at 6:00 and 24:00, abundant winds are curtailed and result of \textbf{P2} violates the relaxed constraints meanwhile. (ii) As expected, conditions where \eqref{eqn_condition_constraint} is unsatisfied have not been witnessed in the simulation case. Since the optimal operational costs of \textbf{P1} and (\textbf{P2}+EEC) are the same, the correctness of Theorem \ref{thm_extreme_mode} is also verified.

\begin{figure}
  \centering
  \includegraphics[width=0.48\textwidth]
  {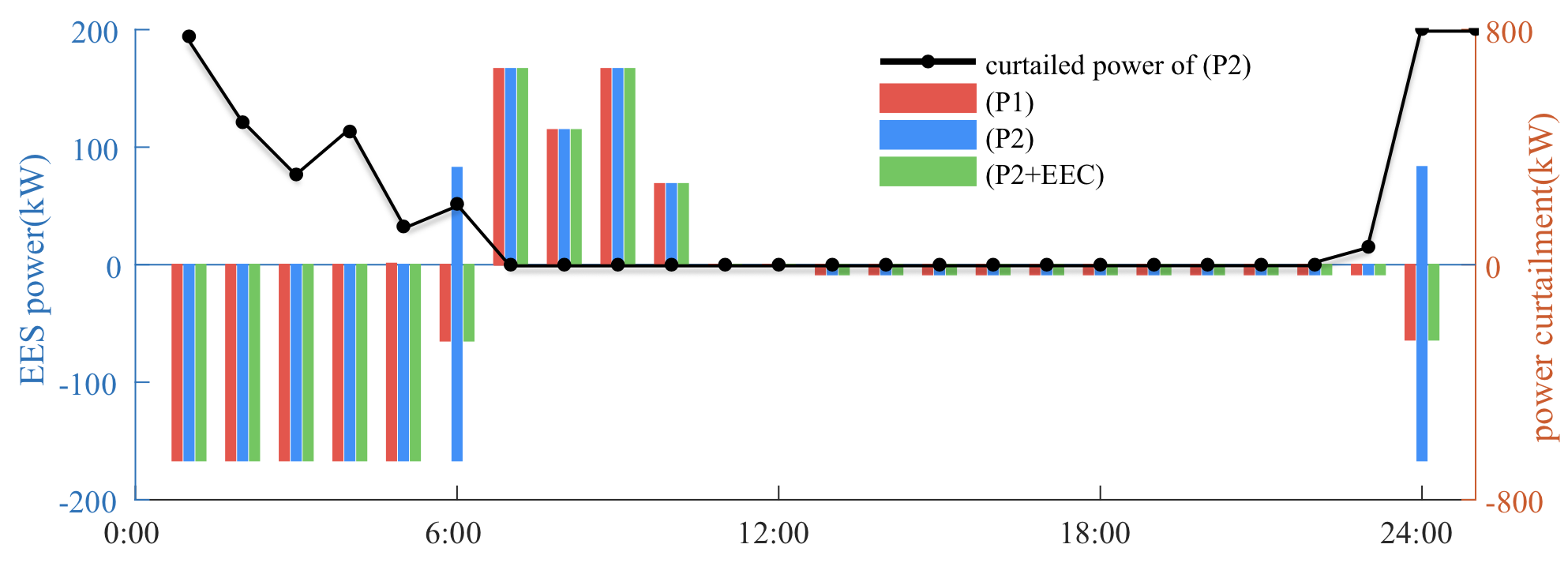}
  \caption{Power of RES curtailment and EES in a wind-rich MES}
  \label{fig_convex}
\end{figure}

\section{Conclusion}
Considering issues associated with information privacy and operation authority, this paper proposes a two-stage TC framework for coordinating IMESs that are managed by different management entities. The storages' complementarity constraints in autonomous optimization are firstly relaxed so that a global optimal solution is guaranteed, with its effectiveness verified in the simulation case. The two-stage transactive control framework is then established that solves the collaborative optimization in a distributed and scalable manner. Compared with sub-gradient based transactive control methods, it is verified to be efficient in obtaining a fairly optimal result of the rolling horizon optimization problem, while within substantially fewer iterations. Simulation shows that the dynamic coordination of large-scale IMESs enables each MES to reduce its overall energy cost autonomously, while protecting transformer from overloading and maximizing local accommodation of RES collaboratively.

Future work will include the coordination of distributed energy resources in the lower level based on transactive control and the analysis of establishing a multi-energy market.

\begin{appendices}

\section{Proof of theory 2.}

 \begin{proof}
  Condition 1: When $X_2^* \in K_1$. First, since \eqref{eqn_EES} is satisfied and the right-hand-side of \eqref{eqn_condition_constraint} equals zero, condition \eqref{eqn_condition_constraint} is always met. Second, because $X_2^*$ and $X_1^*$ denote the feasible and optimal solution of \textbf{P1} respectively, it can be derived that $f(X_1^*) \leq f(X_2^*)$. Meanwhile, since $K_1 \subset K_2$, then $f(X_1^*) \geq f(X_2^*)$. Therefore, $f(X_1^*)=f(X_2^*)$ and $X_2^*$ is also an optimal solution of \textbf{P1}. At last, it is obvious that $X_2^* = \widetilde X_2^*$. To sum up, $\widetilde X_2^*$ is an optimal solution of \textbf{P1}.

  Condition 2: When $X_2^* \notin K_1$, which means that $\exists t\in [\tc,\te]$, such that $P_{\mathrm{ch},t}^{\rm EES*}P_{\mathrm{dch},t}^{\rm EES*} > 0$.
  To prove that $\widetilde X_2^*$ is an optimal solution of \textbf{P1}, let's first prove that it is a feasible solution.

  Without loss of generality, assume $\Delta E_t^{\rm EES*} \geq 0$, and similar proof can be derived when $\Delta E_t^{\rm EES*} < 0$.

  First, $\widetilde X_2^*$ apparently satisfies constraints\eqref{eqn_electricity}-\eqref{eqn_GF}, \eqref{eqn_TES_power_limits}, \eqref{eqn_TES}-\eqref{eqn_shiftable_load_h}, \eqref{eqn_curtailment_h}, \eqref{eqn_TES_SOC}, \eqref{eqn_target_SOC_TES} since the modified variables are not included in these constraints. Besides, the transformation method introduced in \eqref{eqn_transformation} guarantees that $\widetilde X_2^*$ always meets the power balance constraint \eqref{eqn_MESmodel} and the mutual exclusiveness constraint \eqref{eqn_EES}. As for constraints \eqref{eqn_EES_power_limits}, \eqref{eqn_curtailment}, \eqref{eqn_EES_SOC}, \eqref{eqn_target_SOC_EES}:

  \begin{enumerate}
    \item The maximum and minimum charging/discharging power constraint of EES.
    According to \eqref{eqn_transform}:
    \begin{align}
      \begin{split}
      0 \leq& \Delta E_t^{\rm EES*}/\eta_{\rm ch}^{\rm EES}
      = \widetilde P_{\mathrm{ch},t}^{\rm EES*} \\
      =& P_{\mathrm{ch},t}^{\rm EES*}-
      \frac{P_{\mathrm{dch},t}^{\rm EES*}}{\eta \rm_{dch}^{EES}}
      < P_{\mathrm{ch},t}^{\rm EES*}
      \leq \overline P_{\rm ch}^{\rm EES}.
      \end{split}
    \end{align}
    Therefore, constraint \eqref{eqn_EES_power_limits} is satisfied.

    \item Upper and lower limits of RES curtailment. According to \eqref{eqn_EES_delta_power} and \eqref{eqn_condition_constraint}:
    \begin{align}
      \begin{split}
      0 \leq &
      P_t^{\rm curt*} <
      \widetilde P_t^{\rm curt*} \\=
      &P_t^{\rm curt*} +
      \left(
          \frac{1}{\eta_{\rm dch}^{\rm EES}\eta_{\rm ch}^{\rm EES}}-1
        \right)
        P_{\mathrm{dch},t}^{\rm EES*}
      \leq P_t^{\rm res}.
      \end{split}
    \end{align}
    Therefore, constraint\eqref{eqn_curtailment} is also satisfied.

    \item The upper and lower bounds of EES energy. Since $\Delta E_t^{\rm EES*} \equiv \Delta\widetilde E_t^{\rm EES*} $, the energy change of the EES during period $t$ stays unchanged and thus constraint \eqref{eqn_EES_SOC} and \eqref{eqn_target_SOC_EES} still holds.
  \end{enumerate}

  Therefore, all constraints of \textbf{P1} are met for $\widetilde X_2^*$, and $\widetilde X_2^*$ is a feasible solution of \textbf{P1}. Since $f(X_2^*) = f(\widetilde X_2^*)$, hereafter conclusion of condition 1 can be applied to derived that $f(X_1^*) = f(\widetilde X_2^*)$. Thus, $\widetilde X_2^*$ is an optimal solution of \textbf{P1}.
  \end{proof}

  \end{appendices}



  \bibliographystyle{IEEEtran}
  \bibliography{references}


  \end{document}